\numberwithin{equation}{section}
                        \theoremstyle{plain}
\newcommand{\psdraw}[2]
         {\begin{array}{c} \hspace{-1.3mm}
         \raisebox{-4pt}{\psfig{figure=#1.eps,width=#2}}
         \hspace{-1.9mm}\end{array}}
\newcommand\no[1]{}
\newtheorem{theorem}{Theorem}[section]
\newtheorem{thm}{Theorem}
\newtheorem{lemma}[theorem]{Lemma}
\newtheorem{claim}[theorem]{Claim}
\theoremstyle{definition}
\newtheorem{remark}[theorem]{Remark}
\def\BC{\mathbb C}
\def\BZ{\mathbb Z}
\def\BR{\mathbb R}
\def\fb{\mathfrak b}
\def\la{\langle}
\def\ra{\rangle}
\DeclareMathOperator{\tr}{\mathrm tr}
\def\ve{\varepsilon}
\def\be { \begin{equation} }
\def\ee { \end{equation} }
\begin{document}

\title[Twisted Alexander polynomials and $SL_2(\BC)$-representations]
{On the twisted Alexander polynomial for representations into $SL_2(\BC)$}

\author[Anh T. Tran]{Anh T. Tran}
\address{Department of Mathematics, The Ohio State University, Columbus, OH 43210, USA}
\email{tran.350@osu.edu}

\thanks{2010 \textit{Mathematics Subject Classification}.\/ 57M27.}
\thanks{{\it Key words and phrases.\/}
twisted Alexander polynomial, $SL_2(\BC)$-representation, 2-bridge knot, twist knot, fiberedness, genus.}

\begin{abstract}
We study the twisted Alexander polynomial $\Delta_{K,\rho}$ of a knot $K$ associated to a non-abelian representation $\rho$ of the knot group into $SL_2(\BC)$. It is known for every knot $K$ that if $K$ is fibered, then for every non-abelian representation, $\Delta_{K,\rho}$ is monic and has degree $4g(K)-2$ where $g(K)$ is the genus of $K$. Kim and Morifuji recently proved the converse for $2$-bridge knots. In fact they proved a stronger result: if a $2$-bridge knot $K$ is non-fibered, then all but finitely many non-abelian representations on some component have $\Delta_{K,\rho}$ non-monic and degree $4g(K)-2$. In this paper, we consider two special families of non-fibered $2$-bridge knots including twist knots. For these families, we calculate the number of non-abelian representations where $\Delta_{K,\rho}$ is monic and calculate the number of non-abelian representations where the degree of $\Delta_{K,\rho}$ is less than $4g(K)-2$. 
\end{abstract}

\maketitle

\section{Main results}

The twisted Alexander polynomial was introduced by Lin \cite{Lin01-1} 
for knots in $S^3$ and by Wada \cite{Wada94-1} for finitely presentable groups. 
It is a generalization of the classical Alexander polynomial and gives a powerful 
tool in low dimensional topology. One of the most important applications 
is the determination of fiberedness \cite{FV11-1} and genus (the Thurston norm) \cite{FV12-1} 
of knots by the collection of the twisted Alexander
polynomials corresponding to all finite-dimensional representations. 
For literature on other applications and related topics, we refer to the survey paper by
Friedl and Vidussi \cite{FV10-1}.

Following \cite{KM}, we study the twisted Alexander polynomial $\Delta_{K,\rho}$ of a knot $K$ associated to a representation $\rho$ of the knot group into $SL_2(\BC)$. See Section \ref{tap} for a definiton of $\Delta_{K,\rho}$. In general, $\Delta_{K,\rho}$ is a rational function in one variable. However if the representation $\rho$ is non-abelian, then $\Delta_{K,\rho}$ is a Laurent polynomial and has degree less than or equal to $4g(K)-2$ where $g(K)$ is the genus of $K$. The twisted Alexander polynomials of two conjugate representations are the same, hence we can always consider representations up to conjugation. It is known for every knot $K$ that if $K$ is fibered, then for every non-abelian representation, $\Delta_{K,\rho}$ is monic \cite{GKM05-1} and has degree $4g(K)-2$ \cite{KM05-1}. Kim and Morifuji \cite{KM} recently proved the converse for $2$-bridge knots. In fact they proved a stronger result: if a $2$-bridge knot $K$ is non-fibered, then all but finitely many non-abelian  representations on some component have $\Delta_{K,\rho}$ non-monic and degree $4g(K)-2$. 

In this paper, we look into two special families of non-fibered $2$-bridge knots, the knots $\fb(6n+1,3)$ and the twist knots. For these two families, we calculate the number of non-abelian representations where $\Delta_{K,\rho}$ is monic and calculate the number of non-abelian representations where the degree of $\Delta_{K,\rho}$ is less than $4g(K)-2$.

Let $\fb(p,m)$ be the $2$-bridge knot associated to a pair of relatively prime odd integers $p>m>1$ (see e.g. \cite{BZ}), and $K_m~(m>0)$ the $m$-twist knot (see Figures \ref{even} and \ref{odd} in Section 4). Then we have the following.

\begin{thm}
For $K=\fb(6n+1,3)$, the number of (conjugacy classes of) non-abelian representations where the degree of $\Delta_{K,\rho}$ is less than $4g(K)-2$ is equal to $2n$, and the number of non-abelian representations where $\Delta_{K,\rho}$ is monic is also equal to $2n$.
\label{m=3}
\end{thm}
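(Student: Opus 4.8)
The plan is to make the whole statement explicit on the curve of non-abelian characters. I would start from the standard two-generator one-relator presentation $\pi_1(S^3\setminus K)=\langle a,b\mid wa=bw\rangle$ of the $2$-bridge knot group. For $K=\fb(6n+1,3)$ a direct computation of the sign sequence $\ve_i=(-1)^{\lfloor 3i/(6n+1)\rfloor}$ gives $\ve_i=+1$ for $1\le i\le 2n$, $\ve_i=-1$ for $2n<i\le 4n$, and $\ve_i=+1$ for $4n<i\le 6n$, so that $w=(ab)^n(ba)^{-n}(ab)^n$; the same sequence, via $\Delta_K(t)\doteq\sum_{k=0}^{6n}(-1)^k t^{e_k}$ with $e_k=\ve_1+\cdots+\ve_k$, shows $g(K)=n$ and hence $4g(K)-2=4n-2$. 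Every non-abelian $\rho$ is conjugate to one with $\rho(a)=\left(\begin{smallmatrix}s&1\\0&s^{-1}\end{smallmatrix}\right)$ and $\rho(b)=\left(\begin{smallmatrix}s&0\\-u&s^{-1}\end{smallmatrix}\right)$, $s\neq0$, $u\neq0$, and such a pair is a representation exactly when the Riley polynomial $\phi_K(s,u)$ vanishes; since $\phi_K$ has degree $3n$ in $u$, the conjugacy classes of non-abelian representations are exactly the points of the affine curve $C=\{\phi_K(s,u)=0,\ u\neq0\}$, and distinct points of $C$ give inequivalent representations.

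Next I would compute $\Delta_{K,\rho}(t)$ explicitly. Applying Fox calculus to the relator $r=waw^{-1}b^{-1}$, expressing the powers of $\rho(ab)$, $\rho(a)$, $\rho(b)$ that occur in $w$ by means of the Chebyshev-type polynomials $S_k$ ($S_0=1$, $S_1=x$, $S_{k+1}=xS_k-S_{k-1}$, so that $M^k=S_{k-1}(\tr M)M-S_{k-2}(\tr M)I$ in $SL_2(\BC)$), and using $\rho(w)\rho(a)\rho(w)^{-1}=\rho(b)$ to simplify, I expect a closed formula for $\Delta_{K,\rho}(t)$ as a Laurent polynomial in $t$ whose coefficients are explicit polynomials in $s^{\pm1}$ and $u$. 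In particular its top-degree part should be $t^{4n-2}$ with a coefficient $P_n(s,u)$ given in closed form. Then $\deg\Delta_{K,\rho}<4n-2$ precisely when $P_n(s,u)=0$ on $C$, and, in the remaining cases, $\Delta_{K,\rho}$ is monic precisely when $P_n(s,u)=\pm1$ on $C$.

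The final step is the root count. For the degree statement I would form the resultant $\mathrm{Res}_u\bigl(\phi_K(s,u),P_n(s,u)\bigr)$, discard any spurious factors (e.g. powers of $s$, or factors coming from the points where the curve meets $u=0$), and show the surviving factor is a polynomial in $s$ of degree exactly $2n$ with simple roots, each giving one point of $C$; for the monic statement I would repeat this with $P_n(s,u)\mp1$ in place of $P_n(s,u)$. I expect $\phi_K$ and $P_n$ to admit a substitution (replacing $u$ by a new variable in which $\phi_K$ becomes a product of $S$-polynomials, as is typical for $\fb(p,q)$) that makes these resultants factor transparently and the count come out to $2n$.

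The main obstacle is the middle paragraph feeding cleanly into the last one: producing a formula for $\Delta_{K,\rho}(t)$ tight enough that the leading coefficient $P_n(s,u)$ has a usable closed form, and then organizing the resultant computation so that the answer is exactly $2n$ rather than merely $O(n)$ with uncontrolled correction terms. A secondary point to check is that the exceptional representations genuinely lie on $C$, i.e. are non-abelian and irreducible, so that the two $2n$'s really count conjugacy classes of non-abelian representations.
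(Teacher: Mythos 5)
Your overall strategy --- present $G_K=\langle a,b\mid wa=bw\rangle$ with $w=(ab)^n(a^{-1}b^{-1})^n(ab)^n$, extract the extreme coefficient of $\det\Phi(\partial r/\partial a)$ via Fox calculus and Chebyshev identities, and count where it vanishes (resp.\ equals a unit) on the non-abelian representation curve --- is the same as the paper's. But there are two genuine problems. The first is in the setup: distinct points of the Riley curve $C=\{\phi_K(s,u)=0,\ u\neq 0\}$ do \emph{not} give inequivalent representations. The involution $s\mapsto s^{-1}$ fixes $\tr\rho(a)=s+s^{-1}$ and $\tr\rho(ab)=s^2+s^{-2}-u$, hence fixes the character, so $C$ is generically a $2$-to-$1$ cover of the set of conjugacy classes and a naive point count on $C$ (or a resultant in $u$ producing a degree-$2n$ polynomial in $s$) would be off by a factor of $2$. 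The paper sidesteps this by working directly in trace coordinates $(x,z)=(\tr\rho(a),\tr\rho(ab))$, where by Le's theorem the non-abelian representations are exactly the zeros of an explicit polynomial $R_w(x,z)$ built from the Chebyshev polynomials $S_j(z)$.

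The second problem is that everything that actually forces the answer to be $2n$ is deferred to ``I expect.'' Concretely one must: (i) show that \emph{both} the top and bottom coefficients of $\det\Phi(\partial w/\partial a)$ equal $4+(z-2)(z+2-x^2)S^2_{n-1}(z)$ (degree here is the span of a Laurent polynomial, so the low end matters too); (ii) substitute this $=0$ (resp.\ $=1$) into $R_w=0$, put $z=\alpha+\alpha^{-1}$, and reduce to $(2\alpha+1)\alpha^{2n}+\alpha+2=0$ (resp.\ $\alpha^{2n}=-1$); (iii) prove these equations give exactly $n$ distinct values of $z$, none equal to $\pm2$ --- in the genus case the exclusion of multiple roots is a real argument, not a formality; and (iv) check that each such $z$ gives exactly two values of $x$, via $9x^2=z-2$ (resp.\ $4x^2=z+2$). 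None of (i)--(iv) is carried out in the proposal, and you yourself flag this as ``the main obstacle.'' A smaller point: the monicity condition used in the paper is that the leading coefficient equals $1$, not $\pm1$; taking $\pm1$ would change the count. As written, the proposal is a plausible plan with a flawed parametrization, not a proof.
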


\begin{thm}
(i) For $K=K_{2n}$, the number of non-abelian representations where the degree of $\Delta_{K,\rho}$ is less than $4g(K)-2$ is equal to $1+(-1)^n$, and the number of non-abelian representations where $\Delta_{K,\rho}$ is monic is equal to $2n-2-a_n-b_n$, where
\begin{eqnarray*}
a_n &=&  
\begin{cases}
2, & n \equiv 1 \pmod 6 \\
0, & \text{\emph{otherwise}}
\end{cases}, \\
b_n &=& \begin{cases}
2, & n \equiv 1 \pmod{5} \\
0, & \text{\emph{otherwise}}
\end{cases}.
\end{eqnarray*}

(ii) For $K=K_{2n-1}$, the number of non-abelian representations where the degree of $\Delta_{K,\rho}$ is less than $4g(K)-2$ is equal to $1+(-1)^n$, and the number of non-abelian representations where $\Delta_{K,\rho}$ is monic is equal to $2n-2-c_n-d_n-e_n$, where
\begin{eqnarray*}
 c_n &=&  
\begin{cases}
1, & n \equiv 1 \pmod 3 \\
0, & \text{\emph{otherwise}}
\end{cases},\\
d_n &=&  
\begin{cases}
2, & n \equiv 5 \pmod 6 \\
0, & \text{\emph{otherwise}}
\end{cases},\\
e_n &=& \begin{cases}
2, & n \equiv 4 \pmod{5} \\
0, & \text{\emph{otherwise}}
\end{cases}.
\end{eqnarray*}
\label{m=p-2}
\end{thm}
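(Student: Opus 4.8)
The plan is to parametrize the non-abelian $SL_2(\BC)$-representations of the twist-knot groups by the Riley variety, obtain an explicit formula for $\Delta_{K,\rho}(t)$ on that variety, and convert each of the two counts into a resultant computation on a plane curve. To begin, the knot group of $K_m$ has a two-bridge presentation $\la a,b \mid wa=bw\ra$ with $w$ an explicit word in $a,b$ (one form for $m=2n$, one for $m=2n-1$), and every non-abelian representation is conjugate to one of Riley's form $\rho(a)=\bigl(\begin{smallmatrix} s & 1 \\ 0 & s^{-1}\end{smallmatrix}\bigr)$, $\rho(b)=\bigl(\begin{smallmatrix} s & 0 \\ 2-u & s^{-1}\end{smallmatrix}\bigr)$, with $(s,u)$ lying on the Riley curve $C_m=\{\phi_m(s,u)=0\}$. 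One should first put $\phi_m$ into a Chebyshev-polynomial normal form in $u$, record its degrees in $u$ and in $s$, and note that distinct points of $C_m$ give distinct conjugacy classes away from an explicit finite bad set (notably $s=\pm 1$ and the reducible locus).

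Next, from the Fox-calculus/Wada formula, with $r=waw^{-1}b^{-1}$ one has $\Delta_{K_m,\rho}(t)=\det\Phi(\p r/\p a)\big/\det\Phi(b-1)$ and $\det\Phi(b-1)=t^2-(s+s^{-1})t+1$; since $g(K_m)=1$ this is a Laurent polynomial of degree $\le 4g(K_m)-2=2$, and by the symmetry $\Delta_{K_m,\rho}(t)\doteq\Delta_{K_m,\rho}(t^{-1})$ it has the shape $c_2\,t^2+c_1\,t+c_2$ up to units, with $c_2=c_2(s,u)$ a polynomial. The point is that the two properties in the statement translate cleanly: $\deg\Delta_{K_m,\rho}<4g(K_m)-2$ exactly when $c_2(s,u)=0$, and $\Delta_{K_m,\rho}$ is monic exactly when $c_2(s,u)=\pm 1$. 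Most of the computational work is to expand $\det\Phi(\p r/\p a)$ and to reduce $c_2$ modulo $\phi_m$ to a manageable expression; Chebyshev identities should collapse it to a product of a few cyclotomic-type factors in $s$, up to a unit.

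Each count then becomes the number of points of $C_m$ lying on an auxiliary curve: $\{c_2=0\}$ for the first and $\{c_2=1\}\cup\{c_2=-1\}$ for the second. I would compute the resultants $\mathrm{Res}_u(\phi_m,c_2)$ and $\mathrm{Res}_u(\phi_m,c_2\mp 1)$, factor them, count their roots in $s$, and subtract the contributions of the bad set: roots with $s=\pm 1$, reducible points, and roots where the resultant has extra multiplicity because two admissible solutions collapse to a single conjugacy class. The first count should reduce to $1+(-1)^n$ because the relevant factor of $c_2$ meets $C_m$ in a pair of points that is admissible precisely when $n$ is even. The second count has a generic value $2n-2$ coming from the degree of the resultant, and the corrections $a_n,b_n$ for $K_{2n}$ (resp.\ $c_n,d_n,e_n$ for $K_{2n-1}$) record exactly which admissible solutions degenerate: a coincidence of two Chebyshev factors produces the conditions modulo $5$ and $6$ (losing two solutions, hence the value $2$), while a solution running into $s=\pm 1$ produces the condition modulo $3$ in the odd case (losing one solution, hence the value $1$ of $c_n$). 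The parities $m=2n$, $m=2n-1$ and the two signs in $c_2=\pm 1$ are handled separately.

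The main obstacle is twofold. First, producing the clean closed form for $c_2$ on $C_m$: a raw Fox-calculus expansion is intractable for general $m$, so the computation must be carried out in Chebyshev coordinates and the resulting identities verified. Second, the exact bookkeeping of the correction terms — deciding which roots of the resultants are discarded and with what multiplicity, including the subtle point that a given root may be bad for $c_2=+1$ but not for $c_2=-1$ (or conversely), which is what makes the corrections for the two families look so different. The companion family $\fb(6n+1,3)$ in Theorem \ref{m=3} fits the same template but with a shorter word $w$ and resultants free of such degeneracies, which is why both of its counts come out to exactly $2n$.
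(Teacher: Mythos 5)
Your overall strategy is the same one the paper uses -- identify the coefficient of the top-degree term of $\Delta_{K,\rho}$ as a regular function on the curve of non-abelian representations, and count its intersection with the loci where that coefficient is $0$ (degree drop) or $1$ (monic) -- but as written the proposal has two genuine gaps. First, essentially all of the decisive content is deferred. The theorem's answers are exact integers with congruence corrections modulo $3$, $5$ and $6$, and these can only be obtained by actually computing the leading coefficient in closed form and evaluating the defining polynomial of the representation curve at the resulting special points. In the paper this is done in trace coordinates $x=\tr\rho(a)$, $y=\tr\rho(ab^{-1})$ rather than Riley coordinates: the leading coefficient collapses to $(T_n(y)-2)/(y-2)$, so the degree-drop locus is $T_n(y)=2$ (i.e.\ $\beta^n=1$ for $y=\beta+\beta^{-1}$) and the monic locus is $T_n(y)=y$ (i.e.\ $\beta^{n+1}=1$ or $\beta^{n-1}=1$); one then evaluates $R_{even}$ or $R_{odd}$ at these roots of unity using $S_j$ and solves for $x$. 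Your resultant-plus-bad-set scheme could in principle reproduce this, but you give no mechanism for determining which congruences actually occur, and your attributions are partly wrong: for $K_{2n-1}$ the correction $c_n=1$ at $n\equiv 1\pmod 3$ does not come from a solution hitting $s=\pm 1$, but from the equation $x^2=y+1$ acquiring the double root $x=0$ at $y=-1$ (two solutions collapsing to one), while the mod $5$ and mod $6$ corrections come respectively from $y^2+y-1=0$ forcing $x=0$ and from $y=1$ (resp.\ $y=-1$ in the odd case) making the representation equation identically nonzero so that a whole pair of solutions disappears. Without carrying out these evaluations the specific values of $a_n,b_n,c_n,d_n,e_n$ are not established.

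Second, your monicity criterion is off: you propose counting solutions of $c_2=\pm 1$, handling the two signs separately, whereas with the normalization used here (Wada's invariant is well defined up to $t^{2k}$ only, so the sign of the leading coefficient is meaningful, and for a fibered knot and an $SL_2(\BC)$-representation the leading coefficient is exactly $1$) the correct condition is $c_2=1$ alone. The stated count $2n-2-a_n-b_n$ arises precisely from the equation $(T_n(y)-2)/(y-2)=1$; adjoining the locus $c_2=-1$ would overcount. Relatedly, be careful that the coefficient of $t^2$ in your ansatz $c_2t^2+c_1t+c_2$ is computed \emph{after} dividing by $\det\Phi(1-b)=t^2-xt+1$; the paper instead reads off the top and bottom coefficients of the numerator $\det\Phi(\p r/\p a)$ directly via the elementary Lemma on matrix Laurent polynomials, which is where the $4+0=4$ versus $4g(K)-2+2=4$ degree bookkeeping is hidden.
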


The paper is organized as follows. We review the definition of the twisted Alexander polynomial and some related work on fibering and genus of knots in Section \ref{tap}. We then prove Theorem \ref{m=3} in Section 3 and Theorem \ref{m=p-2} in Section 4.

We would like the referee for comments and suggestions that greatly improve the exposition of the paper. We would also like to thank T. Morifuji for comments.

\section{Twisted Alexander polynomials} 

\label{tap}

Let $K$ be a knot in $S^3$ and $G_K=\pi_1(S^3\backslash K)$ its knot group. We choose and fix a Wirtinger presentation 
$$
G_K=
\langle a_1,\ldots,a_\ell~|~r_1,\ldots,r_{\ell-1}\rangle.
$$
Then 
the abelianization homomorphism 
$f:G_K\to H_1(S^3\backslash K,\BZ)
\cong {\BZ}
=\langle t
\rangle$ 
is given by 
$f(a_1)=\cdots=f(a_\ell)=t$. 
Here 
we specify a generator $t$ of $H_1(S^3\backslash K;\BZ)$ 
and denote the sum in $\BZ$ multiplicatively. 
Let us 
consider a linear representation 
$\rho:G_K\to SL_2(\BC)$. 

These maps naturally induce two ring homomorphisms 
$\widetilde{\rho}: {\BZ}[G_K] \rightarrow M(2,{\BC})$ 
and $\widetilde{f}:{\BZ}[G_K]\rightarrow {\BZ}[t^{\pm1}]$, 
where ${\BZ}[G_K]$ is the group ring of $G_K$ 
and 
$M(2,{\BC})$ is the matrix algebra of degree $2$ over ${\BC}$. 
Then 
$\widetilde{\rho}\otimes\widetilde{f}$ 
defines a ring homomorphism 
${\BZ}[G_K]\to M\left(2,{\BC}[t^{\pm1}]\right)$. 
Let 
$F_\ell$ denote the free group on 
generators $a_1,\ldots,a_\ell$ and 
$\Phi:{\BZ}[F_\ell]\to M\left(2,{\BC}[t^{\pm1}]\right)$
the composition of the surjection 
${\BZ}[F_q]\to{\BZ}[G_K]$ 
induced by the presentation of $G_K$ 
and the map 
$\widetilde{\rho}\otimes\widetilde{f}:{\BZ}[G_K]\to M(2,{\BC}[t^{\pm1}])$. 

Let us consider the $(\ell-1)\times \ell$ matrix $M$ 
whose $(i,j)$-component is the $2\times 2$ matrix 
$$
\Phi\left(\frac{\partial r_i}{\partial a_j}\right)
\in M\left(2,{\BZ}[t^{\pm1}]\right),
$$
where 
${\partial}/{\partial a}$ 
denotes the free differential calculus. 
For 
$1\leq j\leq \ell$, 
let us denote by $M_j$ 
the $(\ell-1)\times(\ell-1)$ matrix obtained from $M$ 
by removing the $j$th column. 
We regard $M_j$ as 
a $2(\ell-1)\times 2(\ell-1)$ matrix with coefficients in 
${\BC}[t^{\pm1}]$. 
Then Wada's twisted Alexander polynomial 
of a knot $K$ 
associated to a representation $\rho:G_K\to SL_2({\BC})$ 
is defined to be a rational function 
$$
\Delta_{K,\rho}(t)
=\frac{\det M_j}{\det\Phi(1-a_j)} 
$$
and 
moreover well-defined 
up to a factor $t^{2k}~(k\in{\BZ})$, see \cite{Wada94-1}. It is known that if two
representations $\rho, \rho'$ are conjugate, then $\Delta_{K,\rho}=\Delta_{K,\rho'}$ holds. Hence, in this paper we can always consider representations up to conjugation.

A representation $\rho:G_K\to SL_2(\BC)$ is called non-abelian if $\rho(G_K)$ is a non-abelian subgroup of $SL_2(\BC)$. Suppose $\rho$ is a non-abelian representation. It is known that the twisted Alexander polynomial $\Delta_{K,\rho}(t)$  is always a Laurent polynomial for any knot $K$ \cite{KM05-1}, and is a monic polynomial if $K$ is a fibered knot. It is also known that the converse holds for $2$-bridge knots \cite{KM}. Moreover,  if $K$ is a knot of genus $g(K)$ then $\deg(\Delta_{K,\rho}(t))\leq 4g(K)-2$ \cite{FK06-1}, and the equality holds if $K$ is fibered \cite{KM05-1}. 

We say that the twisted Alexander polynomial $\Delta_{K,\rho}(t)$ determines the fiberedness of $K$ if it is a monic polynomial if and only if $K$ is fibered. We also say $\Delta_{K,\rho}(t)$ determines the knot genus $g(K)$ if $\deg(\Delta_{K,\rho}(t))=4g(K)-2$ holds. 

\section{Proof of Theorem \ref{m=3}}

The following result is well-known. 

\begin{lemma}
The knot $\fb(p,3)$ is non-fibered if and only if $p \equiv 1 \pmod 6$.
\end{lemma}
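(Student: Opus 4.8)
The plan is to recall the standard criterion for fiberedness of 2-bridge knots in terms of continued fraction expansions and then specialize it to $\fb(p,3)$. The knot $\fb(p,3)$ has the 2-bridge normal form corresponding to the fraction $p/3$ (or an equivalent fraction in the standard range), and there is a classical result, going back to work on 2-bridge knots via their Alexander polynomials and more precisely characterized by the structure of the continued fraction, that a 2-bridge knot $\mathfrak{b}(p,q)$ is fibered if and only if the (unique) continued fraction expansion $p/q = [2b_1, 2b_2, \ldots, 2b_k]$ with all $b_i$ nonzero even entries has all $b_i = \pm 1$, equivalently all partial quotients equal to $\pm 2$. So the first step is to write down the continued fraction expansion of $p/3$.

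Next I would carry out the continued fraction computation explicitly. Writing $p = 6n+1$, we have $p/3 = (6n+1)/3 = 2n + 1/3$, so a naive expansion gives $[2n, 3]$, which does not have all even entries; one converts this to the all-even continued fraction form. The relevant observation is that the all-even expansion of $(6n+1)/3$ will contain an entry whose absolute value exceeds $2$ precisely when $n$ forces it, and one checks that $\fb(6n+1,3)$ is fibered exactly when $n$ is such that this expansion reduces to a string of $\pm 2$'s. Alternatively, and more robustly, I would use the Alexander polynomial: for $\fb(p,3)$ the Alexander polynomial is known to be $\Delta(t) = \sum_{i} (-1)^i t^{e_i}$ for an explicit set of exponents, and fiberedness of a 2-bridge knot is equivalent to the Alexander polynomial being monic (since 2-bridge knots are alternating, hence the leading coefficient condition is necessary and sufficient). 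Computing $\Delta_{\fb(6n+1,3)}(t)$ and reading off when its extreme coefficients are $\pm 1$ gives the condition $p \equiv 1 \pmod 6$ directly, noting that $\gcd(p,3)=1$ already forces $p \equiv 1$ or $5 \pmod 6$, and the non-fibered case is exactly $p \equiv 1$.

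The cleanest route, which I would actually write up, is: (1) state that a 2-bridge knot is fibered iff its Alexander polynomial is monic, citing the fact that 2-bridge knots are alternating and for alternating knots fiberedness is detected by the Alexander polynomial (Murasugi); (2) recall or compute the Alexander polynomial of $\fb(p,3)$ in closed form as a function of $p$; (3) observe that the leading coefficient of this polynomial equals $\pm 1$ if and only if $p \equiv 5 \pmod 6$ (using $\gcd(p,3)=1$), hence $\fb(p,3)$ is non-fibered iff $p \equiv 1 \pmod 6$.

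The main obstacle is the bookkeeping in step (2): getting the Alexander polynomial of $\fb(p,3)$ in a clean enough closed form that the monic-ness condition is transparent. The continued fraction $p/3 = [b_1, b_2, \ldots]$ for $\fb(p,3)$ is short (length roughly $3$ in the standard form), so the Alexander polynomial can be computed from the associated $2\times 2$ matrices or from the formula $\Delta(t) = \sum_{k} (-1)^{\lfloor kq/p \rfloor} t^{\lfloor kq/p \rfloor}$-type expressions; carefully tracking how the extreme coefficients depend on $p \bmod 6$ is the only real content. Since the lemma is labeled well-known, I would keep this brief and cite the relevant source rather than reprove the fiberedness criterion from scratch.
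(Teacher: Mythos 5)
Your proposal is correct and takes essentially the same route as the paper: both reduce to the Murasugi--Crowell criterion that an alternating (hence any 2-bridge) knot is fibered iff its Alexander polynomial is monic, and then invoke the known computation that the leading coefficient of the Alexander polynomial of $\fb(p,3)$ is $2$ when $p\equiv 1\pmod 6$ and $1$ when $p\equiv -1\pmod 6$ (the paper cites Hoste--Shanahan for this, just as you propose citing a source rather than recomputing).
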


\begin{proof}
The proof is based on the calculation of the Alexander polynomial. Since $\fb(p,3)$ is an alternating knot, it is fibered if and only if its Alexander polynomial is monic, i.e. has leading coefficient 1, see \cite{Cr, Mu}. The leading coefficient of the Alexander polynomial of $\fb(p,3)$ is 2 if $p \equiv 1 \pmod 6$ and is 1 if $p \equiv -1 \pmod 6$, see e.g. \cite[Section 2]{HS}. The lemma follows.
\end{proof}

The standard presentation of the knot group of $K=\fb(p,m)$ is $G_K=\la a,b \mid wa=bw\ra$ where $w=a^{\ve_1}b^{\ve_2} \dots a^{\ve_{p-2}}b^{\ve_{p-1}}$ and 
$\ve_j=(-1)^{\lfloor jq/p \rfloor}$, see e.g. \cite{BZ}. Here $a$ and $b$ are 2 standard generators of a 2-bridge knot group.

 For a representation $\rho: G_K \to SL_2(\BC)$, let $x=\tr \rho(a)=\tr \rho(b)$ and $z=\tr \rho(ab)$.  Let $d=\frac{p-1}{2}$. By \cite{Le}, the representation $\rho$ is non-abelian if and only if $R_w(x,z)=0$ where
$$
R_w(x,z)=\tr \rho(w)-\tr \rho(w')+\dots+(-1)^{d-1}\tr \rho(w^{(d-1)})+(-1)^d.
$$ 
Here if $u$ is a word in the letters $a,b$, then $u'$ denotes the word obtained from $u$ 
by deleting the two letters at the two ends. 
Then $w^{(d-1)}$ denotes the element obtained from $w$ by applying the deleting operation 
$d-1$ times. 

Let $\{S_j(z)\}_j$ be the sequence of Chebyshev polynomials defined by $S_0(z)=1,\,S_1(z)=z$, and $S_{j+1}(z)=zS_j(z)-S_{j-1}(z)$ for all integers $j$. By \cite[Proposition 3.1]{NT}, we have the following description of  non-abelian representations of $\fb(6n+1,3)$.

\begin{lemma}
For the 2-bridge knot $\fb(6n+1,3)$, one has $w=(ab)^n(a^{-1}b^{-1})^n(ab)^n$ and
$$R_w(x,z)=S_{3n}(z)-S_{3n-1}(z)-x^2(z-2)S^2_{n-1}(z)(S_n(z)-S_{n-1}(z)).$$
\label{rep}
\end{lemma}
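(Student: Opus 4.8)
The plan is to prove Lemma \ref{rep} in two parts: first establish the word form $w=(ab)^n(a^{-1}b^{-1})^n(ab)^n$, then compute $R_w(x,z)$ from that form. For the word, I would return to the defining formula $w=a^{\ve_1}b^{\ve_2}\cdots a^{\ve_{p-2}}b^{\ve_{p-1}}$ with $p=6n+1$, $q=3$, and $\ve_j=(-1)^{\lfloor 3j/(6n+1)\rfloor}$. Tracking $\lfloor 3j/(6n+1)\rfloor$ as $j$ runs from $1$ to $6n-1$, it jumps by $1$ each time $j$ passes a multiple of $(6n+1)/3$, i.e. near $j\approx 2n, 4n$; a careful case analysis of the residues shows the sign pattern is $+$ for $j\le 2n$, $-$ for $2n<j\le 4n$, and $+$ for $4n<j\le 6n-1$, which after grouping consecutive $ab$ or $a^{-1}b^{-1}$ pairs yields exactly $(ab)^n(a^{-1}b^{-1})^n(ab)^n$. (This is the kind of elementary floor-function bookkeeping that one can do once and record.)

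For the trace computation, the key tool is that $\rho(ab)$ has trace $z$, so by Cayley--Hamilton $\tr\rho((ab)^k)=S_k(z)-S_{k-2}(z)$ — more precisely, the Chebyshev recursion lets us write $\rho((ab)^k)=S_{k-1}(z)\rho(ab)-S_{k-2}(z)I$. The plan is to set $A=\rho(ab)$, $B=\rho((ab)^{-1})=\rho(b^{-1}a^{-1})$ and note $\rho(a^{-1}b^{-1})$ is the matrix with the same trace $z$ obtained by the appropriate conjugation/inversion; then $\rho(w)=\rho((ab)^n)\,\rho((a^{-1}b^{-1})^n)\,\rho((ab)^n)$ becomes a product of three terms each linear in the respective group element with Chebyshev coefficients. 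Expanding and taking traces, using the standard trace identities in $SL_2(\BC)$ — $\tr(XY)+\tr(XY^{-1})=\tr X\tr Y$, $\tr(X)=\tr(X^{-1})$, and $\tr\rho(a^2b^{-1})$ type expressions reducing to polynomials in $x$ and $z$ via $\tr\rho(a)=\tr\rho(b)=x$, $\tr\rho(ab)=z$, $\tr\rho(ab^{-1})=x^2-z$ — one obtains $\tr\rho(w)$ as an explicit polynomial in $x,z$.

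Rather than computing each $\tr\rho(w^{(k)})$ separately and summing the alternating series defining $R_w$, I expect the cleaner route is to use the known structure of $R_w$ for 2-bridge knots directly: for $w$ of this palindromic-block shape there should be a closed product/telescoping formula. The deleting operation sends $(ab)^n(a^{-1}b^{-1})^n(ab)^n$ through a predictable family of words, and the alternating sum should telescope. Alternatively — and this is probably the approach actually used — one invokes \cite[Proposition 3.1]{NT} as cited, which presumably gives $R_w$ for $\fb(p,3)$ in closed form, and the work is just to specialize $p=6n+1$ and rewrite the resulting expression in terms of $S_{3n}, S_{3n-1}, S_{n-1}, S_n$ using Chebyshev identities such as $S_{3n}-S_{3n-1}$ factoring and $S_a S_b$ product formulas. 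The identity $S_{m}(z)S_{n}(z)-S_{m-1}(z)S_{n-1}(z)=S_{m+n}(z)$ and similar will be the workhorses.

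The main obstacle is the trace/Chebyshev algebra: verifying that the raw polynomial coming out of $\tr\rho(w)$ (and the correction terms in $R_w$) collapses to the compact factored form $S_{3n}(z)-S_{3n-1}(z)-x^2(z-2)S_{n-1}^2(z)(S_n(z)-S_{n-1}(z))$ requires knowing the right Chebyshev identities and applying them in the right order. In particular the appearance of the factor $(z-2)=\tr\rho(ab)-2$, which vanishes precisely when $\rho(ab)$ is parabolic, and the square $S_{n-1}^2(z)$ are the tell-tale signs that the $x^2$-coefficient factors through the difference $\rho((ab)^n)-\rho((a^{-1}b^{-1})^n)$ evaluated symmetrically — getting that factorization cleanly, rather than by brute-force polynomial expansion, is the delicate point. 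I would isolate the $x^0$ part (which should be pure Chebyshev in $z$, namely $S_{3n}-S_{3n-1}$, matching the abelian-type contribution) and the $x^2$ part separately, since $R_w$ is quadratic in $x$ for these knots, and confirm each matches.
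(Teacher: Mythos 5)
The paper offers no proof of this lemma at all: it simply cites \cite[Proposition 3.1]{NT}, so your fallback option (``invoke \cite{NT} as cited'') is in fact exactly the paper's approach, and your floor-function derivation of $w=(ab)^n(a^{-1}b^{-1})^n(ab)^n$ is correct and complete (the indices run to $j=p-1=6n$, not $6n-1$, but the sign pattern you give is right). The only substantive caveat concerns your direct-computation alternative: $R_w(x,z)$ is the alternating sum $\sum_{k=0}^{d-1}(-1)^k\tr\rho(w^{(k)})+(-1)^d$ over all $d=3n$ deleted subwords, not just $\tr\rho(w)$, and your sketch concentrates on $\tr\rho(w)$ while gesturing at ``telescoping'' for the remaining $3n-1$ terms; those subwords change shape each time the deletion crosses a block boundary of $(ab)^n(a^{-1}b^{-1})^n(ab)^n$, and organizing that sum into the closed form $S_{3n}-S_{3n-1}-x^2(z-2)S_{n-1}^2(S_n-S_{n-1})$ is precisely the content of \cite[Proposition 3.1]{NT}. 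If you intend the proof to be self-contained rather than a citation, that bookkeeping must actually be carried out; as a reconstruction of what this paper does, however, your proposal matches it.
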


Let $r=waw^{-1}b^{-1}$. Then $\frac{\partial r}{\partial a}=w \left( 1+(1-a)w^{-1}\frac{\partial w}{\partial a} \right)$ where 
$$\frac{\partial w}{\partial a}=\left( 1+(ab)^n(a^{-1}b^{-1})^n \right) \left( 1+ \cdots + (ab)^{n-1}\right)-(ab)^n \left( 1 + \cdots+ (a^{-1}b^{-1})^{n-1} \right) a^{-1}.$$
Suppose $\rho: G_K \to SL_2(\BC)$ is a non-abelian representation. Then the twisted Alexander polynomial of $K$ associated to $\rho$ is 
$$\Delta_{K,\rho}(t)=\det \Phi \left( \frac{\partial r}{\partial a} \right) \big/\det \Phi(1-b)=\det \Phi \left( \frac{\partial r}{\partial a} \right) \big/ (1-tx+t^2).$$ 
We have \begin{eqnarray*}
\det \Phi \left( \frac{\partial w}{\partial a} \right) &=& t^{4n} \mid I+ (I-tA)t^{-2n}(AB)^{-n}(BA)^n(AB)^{-n} \\
&& \big[ \left( I+(AB)^n(A^{-1}B^{-1})^n \right) \left( I+ \cdots + (t^2AB)^{n-1}\right)\\
&& -(t^2AB)^n \left( I + \cdots+ (t^{-2}A^{-1}B^{-1})^{n-1} \right) t^{-1}A^{-1} \big] \mid 
\end{eqnarray*}
where $A=\rho(a)$ and $B=\rho(b)$. Here $|M|$ denotes the determinant of $M$. 

\medskip

We will need the following lemma whose proof is elementary.

\begin{lemma}
Let $L(t)=\sum_{j=r}^{s} M_j \, t^j$ be a Laurent polynomial in $t$ whose coefficients are $2 \times 2$ complex matrices. Suppose $|M_r|$ and $|M_s|$ are non-zero. Then the highest and lowest degree terms of $|L(t)|$ (in $t$)  are  $|M_s| \, t^{2s}$ and $|M_r| \, t^{2r}$ respectively. 
\label{base}
\end{lemma}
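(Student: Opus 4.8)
The plan is to reduce the determinant of a matrix-coefficient Laurent polynomial to a statement about the top and bottom scalar coefficients, which is essentially a bookkeeping argument about which products of matrix entries can contribute to the extreme powers of $t$. Write $L(t)=\sum_{j=r}^{s} M_j t^j$ with $M_r,M_s\in M(2,\BC)$ having nonzero determinant. The determinant $|L(t)|$ is a $2\times 2$ determinant, so it is a sum of two products, each a product of two entries of $L(t)$; each entry of $L(t)$ is itself a scalar Laurent polynomial supported in degrees $r,\dots,s$. Hence $|L(t)|$ is a Laurent polynomial supported in degrees $2r,\dots,2s$, and I only need to identify the coefficients of $t^{2s}$ and $t^{2r}$.

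For the top term: the coefficient of $t^{2s}$ in $|L(t)|$ can only receive contributions from products of entries where \emph{both} factors contribute their degree-$s$ part, i.e. from the entries of $M_s$. Writing out the $2\times 2$ determinant, the coefficient of $t^{2s}$ in $|L(t)|$ is exactly $(M_s)_{11}(M_s)_{22}-(M_s)_{12}(M_s)_{21}=|M_s|$, which is nonzero by hypothesis; therefore the highest-degree term of $|L(t)|$ is $|M_s|\,t^{2s}$. The argument for the lowest term is identical with $s$ replaced by $r$: the only way to produce $t^{2r}$ is for both entries in each product to contribute their degree-$r$ part, giving coefficient $|M_r|\neq 0$, so the lowest-degree term is $|M_r|\,t^{2r}$.

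I would present this cleanly by letting $L(t)=M_r t^r + \big(\text{terms of degree} \ge r+1\big)$ and $L(t)=M_s t^s + \big(\text{terms of degree}\le s-1\big)$, expanding $|L(t)|$ bilinearly in the two columns (or rows), and observing that all cross terms involving a column of $M_s$ paired with a column of strictly lower degree land in degree $\le 2s-1$, while the pure top term is $|M_s t^s|=|M_s|\,t^{2s}$; symmetrically for the bottom. There is no real obstacle here — the only point requiring a line of care is making explicit that the determinant of a $2\times 2$ matrix is \emph{bilinear} in its columns (or is a degree-two polynomial in the entries with no monomial mixing the two column-degrees in a way that could cancel the extreme term), so that the extreme-degree coefficient genuinely equals the determinant of the corresponding extreme coefficient matrix rather than some partial expression. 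This generalizes verbatim to $n\times n$ matrices, but for the applications in this paper the $2\times 2$ case suffices.
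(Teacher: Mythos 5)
Your argument is correct and is exactly the elementary degree-counting the paper has in mind: the paper states Lemma \ref{base} "whose proof is elementary" and omits the details, and your expansion of the $2\times 2$ determinant (equivalently, bilinearity in the columns) correctly identifies the coefficients of $t^{2s}$ and $t^{2r}$ as $|M_s|$ and $|M_r|$, which are nonzero by hypothesis. Nothing further is needed.
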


The following lemma follows easily from Lemma \ref{base}.

\begin{lemma}
For the two-bridge knot $K=\fb(6n + 1, 3)$, the highest and lowest degree terms of $\det \Phi \left( \frac{\partial w}{\partial a} \right)$ are  $\mid I + A(AB)^{-n}(BA)^n A^{-1} \mid t^{4n}$ and $\mid  I + (AB)^{n}(BA)^{-n} \mid t^{0}$ respectively, provided that $\mid I + A(AB)^{-n}(BA)^n A^{-1}\mid$ and $\mid I + (AB)^{n}(BA)^{-n}\mid$ are non-zero. 
\label{hl}
\end{lemma}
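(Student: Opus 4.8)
The plan is to read off the coefficient matrices in the Laurent expansion of $\det\Phi\!\left(\frac{\partial w}{\partial a}\right)$ and apply Lemma \ref{base}. From Lemma \ref{rep} we have $w=(ab)^n(a^{-1}b^{-1})^n(ab)^n$, so the stated formula for $\frac{\partial w}{\partial a}$ holds, and the displayed expression for $\det\Phi\!\left(\frac{\partial w}{\partial a}\right)$ writes it as $t^{4n}$ times the determinant of a $2\times 2$ matrix $L(t)$ whose entries are Laurent polynomials in $t$. First I would inspect $L(t)$ and determine the lowest and highest powers of $t$ occurring in it. Writing $W_0=(AB)^{-n}(BA)^n(AB)^{-n}$ (the $t$-independent analogue of the $w^{-1}$ factor after the $t^{-2n}$ is pulled out) and expanding the bracket, the term of lowest total degree comes from taking $I$ in the factor $(I-tA)t^{-2n}$, taking $I+(AB)^n(A^{-1}B^{-1})^n$ from the first product, taking the $j=0$ term $I$ from $I+\cdots+(t^2AB)^{n-1}$, and discarding the subtracted product (which carries strictly higher powers of $t$ after accounting for the $t^{-2n}$ and $t^{-1}A^{-1}$); this produces the coefficient $I$-term together with $t^{-2n}W_0\bigl(I+(AB)^n(A^{-1}B^{-1})^n\bigr)$. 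Combined with the overall $t^{4n}$ this should collapse to $|I+(AB)^n(BA)^{-n}|\,t^0$ as the lowest term, and a symmetric computation at the top — taking $-tA$ from $I-tA$, the $j=n-1$ term $(t^2AB)^{n-1}$ from the first product, and simplifying — should give $|I+A(AB)^{-n}(BA)^nA^{-1}|\,t^{4n}$ as the highest term.

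The key steps, in order, are: (1) substitute $w=(ab)^n(a^{-1}b^{-1})^n(ab)^n$ and verify the free-derivative formula and hence the matrix expression for $\det\Phi\!\left(\frac{\partial w}{\partial a}\right)$ quoted just before the lemma; (2) collect powers of $t$ inside the $2\times 2$ matrix $L(t)$, carefully tracking the $t^{-2n}$, the $t^2$ in each geometric-sum term, the $t^{-2}$ in the subtracted sum, and the trailing $t^{-1}A^{-1}$, to identify $r=-2n$ and $s=2n$ (before multiplying by $t^{4n}$), i.e.\ $L(t)=\sum_{j=-2n}^{2n}M_j t^j$; (3) compute $M_r$ and $M_s$ explicitly and use $A^{-1}=(\tr A)I-A$, $W_0 A^{-1}$-type identities, and the cyclic behaviour of $(AB)^{\pm n}(BA)^{\pm n}$ to put them in the advertised form $I+(AB)^n(BA)^{-n}$ and $I+A(AB)^{-n}(BA)^nA^{-1}$; (4) invoke Lemma \ref{base} with the nonvanishing hypothesis on the two determinants to conclude that the lowest and highest terms of $|L(t)|$ are $|M_r|t^{-4n}$ and $|M_s|t^{4n}$, and then multiply back by $t^{4n}$.

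The main obstacle I expect is step (3): showing that the two extreme coefficient matrices simplify exactly to the claimed closed forms. The bracketed expression mixes $(AB)^n$, $(A^{-1}B^{-1})^n=(BA)^{-n}$ (up to the order of inversion), and the prefactor $(AB)^{-n}(BA)^n(AB)^{-n}$, and at the bottom degree one must see that $W_0\bigl(I+(AB)^n(BA)^{-n}\bigr)$, after left-multiplying the whole thing by $t^{4n}\cdot t^{-2n}=t^{2n}$... wait, more precisely after combining with the $I$-term and using $\det(XY)=\det X\det Y$ on $2\times 2$ matrices, one needs the identity $|\,W_0 + W_0(AB)^n(BA)^{-n} + \text{(the }I\text{ from the }I\text{-term, suitably conjugated)}\,| = |I+(AB)^n(BA)^{-n}|$; verifying such matrix identities requires using that all matrices lie in $SL_2(\BC)$ (so determinant $1$ and the Cayley--Hamilton relation $X^{-1}=(\tr X)I-X$) and some patience with cyclic conjugation. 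A secondary subtlety is making sure no cancellation among intermediate powers of $t$ accidentally lowers the order of $M_r$ or $M_s$ below the generic expectation; this is exactly why the hypothesis ``provided $|I+A(AB)^{-n}(BA)^nA^{-1}|$ and $|I+(AB)^n(BA)^{-n}|$ are non-zero'' is imposed, so once steps (1)--(3) are done, step (4) is immediate from Lemma \ref{base}.
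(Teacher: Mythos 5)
Your overall strategy is exactly the paper's: the paper offers no further argument for this lemma beyond ``follows easily from Lemma \ref{base}'' applied to the displayed expansion of $\det\Phi\!\left(\frac{\partial w}{\partial a}\right)$, and your steps (1), (2), (4) are precisely that. Your treatment of the lowest-degree term is also correct: the $t^{-2n}$ coefficient of $L(t)$ is $W_0\bigl(I+(AB)^n(A^{-1}B^{-1})^n\bigr)=(AB)^{-n}\bigl(I+(BA)^n(AB)^{-n}\bigr)$, whose determinant equals $|I+(AB)^n(BA)^{-n}|$, giving $t^{0}$ after multiplying back by $t^{4n}$.

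However, your degree bookkeeping at the top end is wrong, and since the entire content of the lemma is degree counting, this is a genuine error rather than a cosmetic one. The bracketed expression carries powers of $t$ ranging from $t^0$ (the $j=0$ term of the first geometric sum) up to $t^{2n-1}$ (the term $-(t^2AB)^n\cdot I\cdot t^{-1}A^{-1}$ of the subtracted product), and the prefactor $(I-tA)t^{-2n}$ carries $t^{-2n}$ and $t^{-2n+1}$; hence $L(t)=\sum_{j=-2n}^{0}M_jt^j$, i.e.\ $s=0$, not $s=2n$ as you claim. With your value $s=2n$, Lemma \ref{base} would give $|M_s|t^{4n}$ as the top term of $|L(t)|$ and hence $t^{8n}$ after multiplying by $t^{4n}$, contradicting the statement you are trying to prove. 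Relatedly, you identify the wrong monomials as producing the top coefficient: pairing $-tA$ with the $j=n-1$ term $(t^2AB)^{n-1}$ of the \emph{first} product yields total degree $1-2n+(2n-2)=-1$, which contributes to $M_{-1}$, not to the extreme coefficient. The correct top coefficient is
$$M_0=I+(-A)W_0\bigl(-(AB)^nA^{-1}\bigr)=I+A(AB)^{-n}(BA)^n(AB)^{-n}(AB)^nA^{-1}=I+A(AB)^{-n}(BA)^nA^{-1},$$
obtained by pairing $-tA$ (degree $1$) with the $t^{2n-1}$ term of the subtracted product and adding the standalone $I$ (which sits at degree $0$, not at degree $-2n$ as your phrase ``the coefficient $I$-term together with $t^{-2n}W_0(\cdots)$'' suggests). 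Once $r=-2n$ and $s=0$ are identified correctly, Lemma \ref{base} gives lowest and highest terms $|M_{-2n}|t^{-4n}$ and $|M_0|t^{0}$ of $|L(t)|$, and multiplication by $t^{4n}$ yields exactly the lemma. So the fix is local, but as written your step (4) produces the wrong top degree.
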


The following two lemmas are standard, see e.g. \cite{MT}.

\begin{lemma}
One has $S^2_j(z)-zS_j(z)S_{j-1}(z)+S^2_{j-1}(z)=1$.
\label{S}
\end{lemma}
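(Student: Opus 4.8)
The plan is to prove the identity $S_j^2(z) - zS_j(z)S_{j-1}(z) + S_{j-1}^2(z) = 1$ by induction on $j$, using the three-term recurrence $S_{j+1}(z) = zS_j(z) - S_{j-1}(z)$ that defines the Chebyshev polynomials. Since the recurrence and the claimed identity are both symmetric under the substitution $j \mapsto 1-j$ (indeed $S_0 = S_1' $-type symmetry: one checks $S_{-1}(z) = 0$, $S_{-j}(z) = -S_{j-2}(z)$), it suffices to establish the identity for $j \geq 1$ and the negative-index cases follow formally. So I would set up induction starting from the base case $j = 1$.

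First I would verify the base case: for $j = 1$ we have $S_1(z) = z$ and $S_0(z) = 1$, so $S_1^2 - zS_1 S_0 + S_0^2 = z^2 - z\cdot z \cdot 1 + 1 = 1$, as required. (One could equally start at $j = 0$, where $S_0 = 1$ and $S_{-1} = 0$ give $1 - 0 + 0 = 1$.) For the inductive step, assume $E_j := S_j^2 - zS_jS_{j-1} + S_{j-1}^2 = 1$ and aim to show $E_{j+1} = 1$. I would substitute $S_{j+1} = zS_j - S_{j-1}$ into $E_{j+1} = S_{j+1}^2 - zS_{j+1}S_j + S_j^2$ and expand. The term $S_{j+1}^2$ becomes $z^2S_j^2 - 2zS_jS_{j-1} + S_{j-1}^2$, and the term $-zS_{j+1}S_j$ becomes $-z^2S_j^2 + zS_jS_{j-1}$; adding the leftover $S_j^2$ and collecting terms, the $z^2S_j^2$ contributions cancel and one is left with $S_j^2 - zS_jS_{j-1} + S_{j-1}^2$, which is exactly $E_j = 1$ by the inductive hypothesis.

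There is essentially no obstacle here: the identity is a standard fact about Chebyshev-like sequences (it expresses that the "discriminant form" $X^2 - zXY + Y^2$ is invariant under the companion-matrix action of the recurrence, which has determinant $1$), and the computation is a one-line algebraic cancellation. The only point requiring a word of care is extending the result from $j \geq 1$ to all integers $j$, as stated in the lemma; this is handled either by running the induction downward as well using $S_{j-1} = zS_j - S_{j+1}$, or by invoking the symmetry $S_{-j}(z) = -S_{j-2}(z)$ together with the fact that $E_j$ is unchanged when $(S_j, S_{j-1})$ is replaced by $(-S_{j-2}, -S_{j-1}) = (S_{-j}, S_{-(j-1)})$ up to sign, which leaves the quadratic form invariant. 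Thus the proof is a short two-way induction, and I would present it compactly without belaboring the negative-index bookkeeping.
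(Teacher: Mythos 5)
Your proof is correct. The paper itself offers no proof of this lemma --- it simply labels it ``standard'' and cites [MT] --- and your two-way induction (with the base case $S_1=z$, $S_0=1$ and the one-line cancellation in the inductive step) is exactly the standard verification; the companion-matrix/determinant remark you make is the usual conceptual explanation, and the negative-index bookkeeping via $S_{-1}=0$ and $S_{-j}=-S_{j-2}$ is handled correctly.
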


\begin{lemma}
\label{recurrence}
Suppose the sequence $\{M_j\}_{j \in \BZ}$ of $2 \times 2$ matrices satisfies 
the recurrence relation $M_{j+1}=zM_j-M_{j-1}$ for all integers $j$. Then 
$
M_j = S_{i-1}(z)M_1-S_{i-2}(z)M_0.
$
\end{lemma}

\begin{lemma}
One has 
$$\mid I + A(AB)^{-n}(BA)^n A^{-1} \mid = \mid  I + (AB)^{n}(BA)^{-n} \mid = 4+(z-2)(z+2-x^2)S^2_{n-1}(z).$$
\label{hl'}
\end{lemma}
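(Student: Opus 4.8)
The plan is to compute both determinants directly using the standard $SL_2(\BC)$-trace machinery, exploiting that for matrices in $SL_2(\BC)$ one has $|I+X| = 2 + \tr(X)$. First I would set $U = AB$ and note that, since $A,B \in SL_2(\BC)$, both $U$ and $BA = A^{-1}UA$ lie in $SL_2(\BC)$ with $\tr U = \tr(BA) = z$. Then $(AB)^n(BA)^{-n}$ and $A(AB)^{-n}(BA)^nA^{-1}$ are conjugate (conjugate by $A$ sends the first to $A(AB)^n(BA)^{-n}A^{-1} = (BA)^n (AB)^{-n}$... here I'd need to be careful; more precisely $A(AB)^{-n}(BA)^nA^{-1} = (BA)^{-n}(AB)^n$, which is the inverse of $(AB)^n(BA)^{-n}$ up to conjugacy, and an element of $SL_2$ has the same trace as its inverse). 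So the two determinants $|I + A(AB)^{-n}(BA)^nA^{-1}|$ and $|I + (AB)^n(BA)^{-n}|$ are both equal to $2 + \tr\big((AB)^n(BA)^{-n}\big)$, which already collapses the problem to evaluating a single trace.

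The remaining task is to show $2 + \tr\big((AB)^n(BA)^{-n}\big) = 4 + (z-2)(z+2-x^2)S_{n-1}^2(z)$. I would use Lemma~\ref{recurrence}: applying Cayley--Hamilton to $U = AB$ gives $U^n = S_{n-1}(z)U - S_{n-2}(z)I$ and likewise $(BA)^{-n} = (BA)^n$ conjugated/inverted, i.e. $(BA)^{-n} = S_{n-1}(z)(BA)^{-1} - S_{n-2}(z)I$ after using that the trace of $(BA)^{-1}$ is also $z$ and applying the recurrence to the sequence $M_j = (BA)^{-j}$, which satisfies $M_{j+1} = zM_j - M_{j-1}$. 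Multiplying these two linear-in-$U$ expressions out, the product $(AB)^n(BA)^{-n}$ becomes a $\BC$-linear combination of $I$, $AB\cdot(BA)^{-1} = ABA^{-1}B^{-1}A^{-1}$... rather $AB(BA)^{-1} = AB A^{-1}B^{-1}$, of $(BA)^{-1}$, and of $AB$, with coefficients that are products of Chebyshev polynomials. Taking traces, I would reduce everything to the basic trace identities for words of length $\le 2$ in $A,B$: $\tr A = \tr B = x$, $\tr(AB) = z$, $\tr(AB^{-1}) = x^2 - z$, and $\tr(ABA^{-1}B^{-1}) = z^2 - x^2(z-2) - 2$ (the commutator trace, which is a standard formula in terms of $x$ and $z$). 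Substituting these and simplifying with the Chebyshev identity of Lemma~\ref{S} (namely $S_{n-1}^2 - zS_{n-1}S_{n-2} + S_{n-2}^2 = 1$) should collapse the expression to $2 + (z-2)(z+2-x^2)S_{n-1}^2(z)$.

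The main obstacle I anticipate is the bookkeeping in the trace computation: after expanding $\big(S_{n-1}U - S_{n-2}I\big)\big(S_{n-1}V - S_{n-2}I\big)$ with $V = (BA)^{-1}$, one gets the term $S_{n-1}^2 \tr(UV)$ where $UV = AB(BA)^{-1} = ABA^{-1}B^{-1}$, plus cross terms $-S_{n-1}S_{n-2}(\tr U + \tr V) = -2zS_{n-1}S_{n-2}$, plus $S_{n-2}^2 \cdot 2$; getting the commutator trace right and then seeing the $S_{n-1}S_{n-2}$ and $S_{n-2}^2$ terms combine via Lemma~\ref{S} into a clean multiple of $S_{n-1}^2$ requires care but is purely mechanical. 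An alternative, perhaps cleaner, route that avoids the commutator trace is to observe that $|I + (AB)^n(BA)^{-n}| = |( BA)^n + (AB)^n| \cdot |(BA)^{-n}|$ and $|(BA)^{-n}| = 1$, then expand $(BA)^n + (AB)^n$ using the recurrence: $(BA)^n + (AB)^n = S_{n-1}(z)(BA + AB) - 2S_{n-2}(z)I$, so the determinant is $|S_{n-1}(z)(AB+BA) - 2S_{n-2}(z)I|$. Expanding this $2\times 2$ determinant and using $\tr(AB+BA) = 2z$ and a formula for $\det(AB+BA)$ in terms of $x,z$ — which equals $x^2 z - 2z + ... $, a standard two-variable expression — followed by Lemma~\ref{S}, yields the claim. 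I would present whichever of these two computations turns out shorter, but either way the hard part is just the faithful algebra; there is no conceptual difficulty.
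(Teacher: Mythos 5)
Your proposal is correct and follows essentially the same route as the paper's proof: both determinants are reduced to $2+\tr\bigl((AB)^n(BA)^{-n}\bigr)$ via $\det(I+C)=2+\tr C$ for $C\in SL_2(\BC)$, then $(AB)^n$ and $(BA)^{-n}$ are expanded linearly using the Chebyshev recurrence of Lemma~\ref{recurrence}, the commutator trace $\tr(ABA^{-1}B^{-1})=z^2-x^2(z-2)-2=2+(z-2)(z+2-x^2)$ is inserted, and Lemma~\ref{S} collapses the remaining terms. The alternative computation via $\lvert (AB)^n+(BA)^n\rvert$ that you sketch at the end is a harmless variant; your main argument already matches the paper.
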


\begin{proof}
It is easy to see that $\mid I + A(AB)^{-n}(BA)^n A^{-1} \mid = \mid  I + (AB)^{n}(BA)^{-n} \mid$. By applying Lemma \ref{recurrence} twice, once with $M_j=(AB)^{j}$ and
once with $M_j=(A^{-1}B^{-1})^{j}$, we have 
\begin{eqnarray*}
\tr (AB)^{n}(BA)^{-n} &=& \tr (AB)^{n}(A^{-1}B^{-1})^{n}\\
&=& S^2_{n-1}(z) \tr ABA^{-1}B^{-1}+ S^2_{n-1}(z) \tr I \\
&& - \, S_{n-1}(z)S_{n-2}(z)(\tr AB + \tr A^{-1}B^{-1}).
\end{eqnarray*}

From the identity $\tr CD=\tr C \tr D - \tr CD^{-1}$ for all matrices $C,D$ in $SL_2(\BC)$, it is easy to see that $\tr ABA^{-1}B^{-1}=z^2-zx^2+2x^2-2=2+(z-2)(z+2-x^2)$. Hence, by Lemma \ref{S}, we obtain
\begin{eqnarray*}
\tr (AB)^{n}(BA)^{-n} &=& (2+(z-2)(z+2-x^2))S^2_{n-1}(z) +2S^2_{n-1}(z)-2zS_{n-1}(z)S_{n-2}(z)\\
&=& 2+(z-2)(z+2-x^2)S^2_{n-1}(z).
\end{eqnarray*} 
The lemma follows, since $\det(I+C)=1+\det C+\tr C$.
\end{proof}

\subsubsection{Genus} 
We first note that the genus of $K=\fb(6n+1,3)$ is $n$. Lemmas \ref{rep}, \ref{hl} and \ref{hl'} imply that the number of (conjugacy classes of) non-abelian representations where the degree of $\Delta_{K,\rho}$ is less than $4g(K)-2$  is equal to the number of solutions $(x,z) \in \BC^2$ of the following system 
\begin{eqnarray}
4+(z-2)(z+2-x^2)S^2_{n-1}(z) &=& 0, \label{T}\\
S_{3n}(z)-S_{3n-1}(z)-x^2(z-2)S^2_{n-1}(z)(S_n(z)-S_{n-1}(z)) &=& 0. \label{R}
\end{eqnarray}

Suppose eq. \eqref{T} holds. Then $x^2(z-2)S^2_{n-1}(z)=4+(z^2-4)S^2_{n-1}(z)$. Eq. \eqref{R} is then equivalent to the following
\begin{equation}
S_{3n}(z)-S_{3n-1}(z)-(4+(z^2-4)S^2_{n-1}(z))(S_n(z)-S_{n-1}(z))=0.
\label{rep-equiv}
\end{equation}

\begin{claim}
The equation \eqref{rep-equiv} has $n$ distinct roots, all of which are different from $\pm 2$.
\label{claim}
\end{claim}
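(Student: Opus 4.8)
The plan is to study the polynomial appearing on the left side of \eqref{rep-equiv} by re-expressing it in closed form using Chebyshev identities, and then to count its roots directly. First I would simplify the "trace" part $S_{3n}(z)-S_{3n-1}(z)$. Writing $3n = n + n + n$ and using the composition/addition formulas for $S_j$ (for instance $S_{a+b}(z)=S_a(z)S_b(z)-S_{a-1}(z)S_{b-1}(z)$, which follows from Lemma \ref{recurrence}), one can express $S_{3n}$ and $S_{3n-1}$ in terms of $S_n(z)$, $S_{n-1}(z)$, and $z$. The hope—and this is the source of the claimed factor $n$—is that after substituting these expressions, the entire left side of \eqref{rep-equiv} factors with $S_n(z)-S_{n-1}(z)$, or an $n$-degree factor derived from it, pulled out, so that the equation reduces to something like $(S_n(z)-S_{n-1}(z))\cdot P(z)=0$ where $P$ has small degree, or else to a single polynomial of degree exactly $n$ after cancellation of higher-degree terms. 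Since $\deg S_j = j$, the naive degree of \eqref{rep-equiv} is $\max(3n,\, 2(n-1)+n) = 3n$; the claim that only $n$ roots survive means a massive cancellation of the top $2n$ coefficients must occur, so verifying that cancellation is the crux.

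After obtaining the reduced polynomial, say $Q(z)$ of degree $n$, I would check it has $n$ \emph{distinct} roots. The standard tool here is to make the trigonometric substitution $z = 2\cos\theta$ (equivalently $z = \zeta + \zeta^{-1}$), under which $S_{j-1}(z) = \sin(j\theta)/\sin\theta = (\zeta^j - \zeta^{-j})/(\zeta - \zeta^{-1})$. Under this substitution the combination $S_j(z)-S_{j-1}(z)$ and the other building blocks become simple trigonometric (or Laurent-polynomial-in-$\zeta$) expressions, and one can read off the roots explicitly as $z = 2\cos\theta$ for $\theta$ in an arithmetic progression, checking that these values are pairwise distinct and that none equals $\pm 2$ (i.e. $\theta \not\equiv 0 \pmod{\pi}$). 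The exclusion of $z = \pm 2$ should be immediate once the roots are listed, since $z = \pm 2$ corresponds to the degenerate values $\theta = 0, \pi$ which typically make $S_{n-1}(z) = \pm n \neq 0$ and hence do not satisfy the reduced equation; alternatively one can substitute $z = 2$ and $z = -2$ directly into \eqref{rep-equiv} using $S_j(2) = j+1$ and $S_j(-2) = (-1)^j(j+1)$ and check the result is nonzero.

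I expect the main obstacle to be the algebraic bookkeeping in the first step: showing that the degree-$3n$ expression collapses to degree exactly $n$ requires identifying the right factorization, and a brute-force expansion in powers of $z$ is unwieldy. The cleanest route is probably to pass to the variable $\zeta$ (with $z = \zeta + \zeta^{-1}$) from the outset, so that every $S_{j-1}(z)$ becomes $(\zeta^j-\zeta^{-j})/(\zeta-\zeta^{-1})$; then \eqref{rep-equiv} becomes an identity between Laurent polynomials in $\zeta$, the cancellations become visible as telescoping, and the roots are found by solving an equation of the form $\zeta^k = $ (something explicit). Once the root count and distinctness are established in the $\zeta$-picture, translating back to $z$ and verifying $z \neq \pm 2$ is routine. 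A secondary point to watch is that the map $\zeta \mapsto z$ is two-to-one, so I must be careful to count roots in $z$, not in $\zeta$; pairing $\zeta$ with $\zeta^{-1}$ handles this.
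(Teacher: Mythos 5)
Your opening moves coincide with the paper's: substitute $z=\alpha+\alpha^{-1}$, rewrite each $S_j$ as $\frac{\alpha^{j+1}-\alpha^{-(j+1)}}{\alpha-\alpha^{-1}}$, watch the degree-$3n$ expression collapse, and rule out $z=\pm 2$ by direct evaluation with $S_j(\pm 2)=(\pm1)^j(j+1)$. Indeed the left side of \eqref{rep-equiv} becomes $\alpha^{-n}\,\frac{(2\alpha+1)\alpha^{2n}+\alpha+2}{\alpha+1}$, a Laurent polynomial symmetric under $\alpha\mapsto\alpha^{-1}$ and hence a polynomial of degree exactly $n$ in $z$. Up to this point your plan is sound and is essentially what the paper does.

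The gap is in how you propose to establish that the $n$ roots are \emph{distinct}. You expect the reduced equation to take the form $\zeta^k=$ (something explicit), with roots $z=2\cos\theta$ running over an arithmetic progression that you can list and compare. That is what happens for the companion equation \eqref{eqn} in the fiberedness count (there one gets $\alpha^{2n}=-1$), but it does \emph{not} happen here: the reduced equation is $(2\alpha+1)\alpha^{2n}+\alpha+2=0$, i.e.\ $\alpha^{2n}=-\frac{\alpha+2}{2\alpha+1}$, whose roots admit no closed form. So "read off the roots and check they are pairwise distinct" is not available, and your proposal has no fallback for proving that the degree-$n$ polynomial in $z$ is squarefree. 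The paper supplies the missing idea: if $h_2(\alpha)=2\alpha^{2n+1}+\alpha^{2n}+\alpha+2$ had a multiple root with $|\alpha|\ge 1$ (one may assume this since roots come in pairs $\alpha,\alpha^{-1}$), then $h_2(\alpha)=h_2'(\alpha)=0$ forces $\alpha^2+\bigl(\tfrac{5}{2}+\tfrac{3}{4n}\bigr)\alpha+1=0$, hence $\alpha+\alpha^{-1}<-\tfrac{5}{2}$, hence $\alpha$ real with $\alpha<-2$; but then $(2\alpha+1)\alpha^{2n}+\alpha+2<0$, a contradiction. Some argument of this discriminant/common-root type is needed to complete your outline.
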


\begin{proof}
Let $h_1(z)$ denote the left hand side of eq. \eqref{rep-equiv}. Since $S_j(2)=j+1$ and $S_j(-2)=(-1)^j(j+1)$ for all integers $j$, we have $h_1(2)=-3$ and $h_1(-2)=(-1)^{n+1}(2n+3)$.

Suppose $z \not= \pm 2$. We may write $z=\alpha+\alpha^{-1}$ where $\alpha \not= \pm 1$. Since $S_j(z)=\frac{\alpha^{j+1}-\alpha^{-(j+1)}}{\alpha-\alpha^{-1}}$ for all integers $j$, we have $h_1(z)=\alpha^{-n}\frac{(2\alpha+1)\alpha^{2n}+\alpha+2}{\alpha+1}$. It follows that $h_1(z)$ is a polynomial in $z$ of degree $n$. We want to show that $h_1(z)$ does not have multiple roots. This holds true if we can show that the polynomial $h_2(\alpha)=2\alpha^{2n+1}+\alpha^{2n}+\alpha+2$ does not have multiple roots $\alpha$ such that $|\alpha| \ge 1$. (Note that if $\alpha$ is a root of $h_2$, then so is $\alpha^{-1}$).

Suppose $h_2$ has a multiple root $\alpha \in \BC$ such that $|\alpha| \ge 1$. Then $h_2(\alpha)=h_2'(\alpha)=0$, or equivalently 
$$(2\alpha+1)\alpha^{2n}+\alpha+2=2((2n+1)\alpha+n)\alpha^{2n-1}+1=0.$$
It follows that $\frac{\alpha+2}{2\alpha+1}=\frac{\alpha}{2((2n+1)\alpha+n)}$, i.e. $\alpha^2+(\frac{5}{2}+\frac{3}{4n})\alpha+1=0$. Hence $\alpha+\alpha^{-1}=-(\frac{5}{2}+\frac{3}{4n})<-\frac{5}{2}$. Then we must have $\alpha \in \BR$ and $\alpha<-2$, since $|\alpha| \ge 1$. Hence $h_1(\alpha)=(2\alpha+1)\alpha^{2n}+\alpha+2<0$, a contradiction. The claim follows.
\end{proof}

For each solution $z$ of eq. \eqref{rep-equiv}, we write $z=\alpha+\alpha^{-1}$ for some $\alpha \in \BC^*$. Then from the proof of Claim \eqref{claim}, we have $\alpha \not= \pm 1$ and $\alpha^{-n}\frac{(2\alpha+1)\alpha^{2n}+\alpha+2}{\alpha+1}=0$. It follows that $\alpha^{2n}=-\frac{\alpha+2}{2\alpha+1} \not= 1$. Eq. \eqref{T} is then equivalent to the following $$x^2=\frac{4+(z^2-4)S^2_{n-1}(z)}{(z-2)S^2_{n-1}(z)}=\frac{(\alpha+1)^2(\alpha^{2n}+1)^2}{\alpha(\alpha^{2n}-1)^2}=\frac{(\alpha-1)^2}{9\alpha},$$
i.e. $9x^2=z-2$. Hence Claim \ref{claim} implies that the system \eqref{T}+\eqref{R} has exactly $2n$ complex solutions $(x,z)$. This means that the number of non-abelian representations where the degree of $\Delta_{K,\rho}$ is less than $4g(K)-2$  is equal to $2n$.

\subsubsection{Fiberedness} The number of non-abelian representations where $\Delta_{K,\rho}$ is monic is equal to the number of solutions $(x,z) \in \BC^2$ of the following system
\begin{eqnarray}
4+(z-2)(z+2-x^2)S^2_{n-1}(z) &=& 1, \label{T1}\\
S_{3n}(z)-S_{3n-1}(z)-x^2(z-2)S^2_{n-1}(z)(S_n(z)-S_{n-1}(z)) &=& 0.  \label{repn}
\end{eqnarray}

Suppose eq. \eqref{T1} holds. Then $x^2(z-2)S^2_{n-1}(z)=3+(z^2-4)S^2_{n-1}(z)$, and eq. \eqref{repn} is equivalent to the following 
\begin{equation}
S_{3n}(z)-S_{3n-1}(z)-(3+(z^2-4)S^2_{n-1}(z))(S_n(z)-S_{n-1}(z))=0.
\label{eqn}
\end{equation}

\begin{claim}
The equation \eqref{eqn} has $n$ distinct roots, all of which are different from $\pm 2$.
\label{claim'}
\end{claim}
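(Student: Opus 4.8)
The plan is to mimic the structure of the proof of Claim \ref{claim} essentially verbatim, replacing the constant $4$ by $3$ in the relevant place. Let $g_1(z)$ denote the left-hand side of eq. \eqref{eqn}. First I would evaluate $g_1$ at $\pm 2$ using $S_j(2)=j+1$ and $S_j(-2)=(-1)^j(j+1)$; this gives $g_1(2)=1-0-(3+0)\cdot 1=-2$ and $g_1(-2)=(-1)^{n+1}(2n+1)-\big(3+0\big)(-1)^n n = (-1)^{n+1}(2n+1)+(-1)^{n+1}\cdot(-3n)$, which one checks is nonzero; hence no root of \eqref{eqn} is $\pm 2$.

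Next, for $z\neq\pm 2$ I would substitute $z=\alpha+\alpha^{-1}$ with $\alpha\neq\pm 1$ and use $S_j(z)=\frac{\alpha^{j+1}-\alpha^{-(j+1)}}{\alpha-\alpha^{-1}}$ to rewrite $g_1(z)$ in the form $\alpha^{-n}\,\frac{P(\alpha)}{\alpha+1}$ for an explicit polynomial $P(\alpha)$ of degree $2n+1$, analogous to $h_2(\alpha)=2\alpha^{2n+1}+\alpha^{2n}+\alpha+2$ in Claim \ref{claim}. With the constant $3$ in place of $4$ the computation should produce $P(\alpha)=2\alpha^{2n+1}+\alpha^{2n}-(\alpha+1)+(\alpha+2)=2\alpha^{2n+1}+\alpha^{2n}+1$ (or whatever the correct bookkeeping yields); in any case $P$ is reciprocal up to the obvious symmetry, so its roots come in pairs $\alpha,\alpha^{-1}$, and dividing by $\alpha+1$ leaves a degree-$n$ polynomial in $z$. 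This shows $g_1(z)$ is a genuine polynomial of degree $n$, so \eqref{eqn} has at most $n$ roots counted with multiplicity.

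The remaining point, and the one requiring the most care, is that these $n$ roots are distinct, i.e. that $P(\alpha)$ has no multiple root $\alpha$ with $|\alpha|\ge 1$ (other than possibly $\alpha=\pm1$, which are excluded). As in Claim \ref{claim}, I would set $P(\alpha)=P'(\alpha)=0$, eliminate the high powers of $\alpha$ between the two equations to obtain a quadratic $\alpha^2+c\,\alpha+1=0$ with an explicit rational constant $c$ depending on $n$, deduce $\alpha+\alpha^{-1}=-c$, and then argue that $|c|$ is large enough (bounded below by something exceeding $2$, as in the $-\frac52-\frac{3}{4n}$ estimate there) to force $\alpha\in\BR$ with $\alpha<-2$; plugging such an $\alpha$ back into $P$ and checking the sign of each term gives a contradiction. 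The main obstacle is purely computational: getting the exact form of $P(\alpha)$ right after the substitution, and then making the elimination-and-sign argument go through with the slightly different constant. One should double-check the edge case of small $n$ (e.g. $n=1$) separately, where the degree count and the quadratic in $\alpha$ may degenerate.
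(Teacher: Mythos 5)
Your overall strategy (substitute $z=\alpha+\alpha^{-1}$ and analyze the resulting polynomial in $\alpha$) is exactly the paper's, but the proposal leaves the one step that actually matters --- the explicit form of the left-hand side after substitution --- as a guess, and the guess is wrong. Carrying out the computation with the constant $3$ in place of $4$, one finds $1+(z^2-4)S_{n-1}^2(z)+3=1+\alpha^{2n}+\alpha^{-2n}$ and hence
$$h_3(z)=\frac{\alpha^{3n+1}+\alpha^{-3n}-(1+\alpha^{2n}+\alpha^{-2n})(\alpha^{n+1}+\alpha^{-n})}{\alpha+1}=-\alpha^{-n}\left(\alpha^{2n}+1\right)=-T_n(z),$$
so $P(\alpha)=(\alpha+1)(\alpha^{2n}+1)=\alpha^{2n+1}+\alpha^{2n}+\alpha+1$, not $2\alpha^{2n+1}+\alpha^{2n}+1$. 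Your guessed $P$ is not even reciprocal, which contradicts your own assertion that its roots pair up as $\alpha,\alpha^{-1}$; had it been correct, $g_1$ would not descend to a degree-$n$ polynomial in $z$ at all. With the correct $P$, the roots are the $\alpha$ with $\alpha^{2n}=-1$, i.e.\ $z=2\cos\frac{(2k+1)\pi}{2n}$ for $0\le k\le n-1$: these are manifestly $n$ distinct values in $(-2,2)$, so the entire discriminant/elimination machinery you plan to import from Claim \ref{claim} is unnecessary --- and, as you describe it (``eliminate to obtain a quadratic $\alpha^2+c\alpha+1=0$''), it would not materialize in that form for this $P$ anyway, since setting $P=P'=0$ here degenerates rather than producing such a quadratic. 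This is the genuine gap: the distinctness argument you propose is built on a polynomial that is not the one the computation produces.

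Two smaller points: your evaluation of $g_1(-2)$ is miscomputed ($S_{3n}(-2)-S_{3n-1}(-2)=(-1)^n(6n+1)$ and $S_n(-2)-S_{n-1}(-2)=(-1)^n(2n+1)$, giving $g_1(-2)=2(-1)^{n+1}$), though your conclusion that it is nonzero is correct; and no separate treatment of small $n$ is needed once the closed form $h_3=-T_n$ is in hand.
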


\begin{proof}
Let $h_3(z)$ denote the left hand side of eq. \eqref{eqn}. It is easy to see that $h_3(2)=-2$ and $h_3(-2)=2(-1)^{n+1}$. Suppose $z \not= \pm 2$. We may write $z=\alpha+\alpha^{-1}$ where $\alpha \not= \pm 1$.
Then $h_3(z)=\alpha^{-n}(\alpha^{2n}+1)$, and the claim follows easily. 
\end{proof}

For each solution $z$ of eq. \eqref{eqn}, we write $z=\alpha+\alpha^{-1}$ for some $\alpha \in \BC^*$. Then from the proof of Claim \eqref{claim'}, we have $\alpha \not= \pm 1$ and $\alpha^{2n}=-1$. Eq. \eqref{T1} is then equivalent to the following $$x^2=\frac{3+(z^2-4)S^2_{n-1}(z)}{(z-2)S^2_{n-1}(z)}=\frac{(\alpha+1)^2(\alpha^{4n}+\alpha^{2n}+1)}{\alpha(\alpha^{2n}-1)^2}=\frac{(\alpha+1)^2}{4\alpha},$$
i.e. $4x^2=z+2$. Hence Claim \ref{claim'} implies that the system \eqref{T1}+\eqref{repn} has exactly $2n$ complex solutions $(x,z)$. This means that the number of non-abelian representations where $\Delta_{K,\rho}$ is monic is equal to $2n$.

This completes the proof of Theorem \ref{m=3}.

\begin{remark}
One can easily show that all the solutions $(x,z)$ of the systems \eqref{T}+\eqref{R} and \eqref{T1}+\eqref{repn} satisfying the condition $x \not= 2$. It follows that the twisted Alexander polynomial associated to any parabolic $SL_2(\BC)$-representation of the knot group of the 2-bridge knot $\fb(6n+1,3)$ determines the fiberedness and genus of $\fb(6n+1,3)$. This gives a proof of a conjecture of Dunfield, Friedl and Jackson \cite{DFJ} for $\fb(6n+1,3)$. 
\end{remark}

\section{Proof of Theorem \ref{m=p-2}}

We first note that the $m$-twist knot $K_m,\,m>0$, is non-fibered if and only if $m>2$. ($K_1$ is the trefoil knot, and $K_2$ is the figure-8 knot.)

\subsection{$K_{2n},\,n>1$} From \cite{Tr-twist} (and also \cite{NT}), the knot group of $K=K_{2n}$ is $\la a,b \mid wa=bw \ra$ where $a,b$ are meridians depicted in Figure \ref{even} and $w=(ba^{-1})^nb(ab^{-1})^n$. Note that this is not the standard presentation of a $2$-bridge knot.

\begin{figure}[htpb]
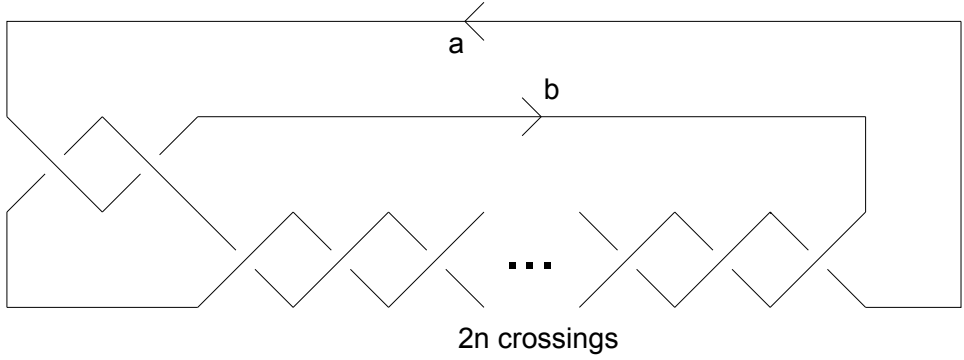

$$ \psdraw{drawing-even}{5in} $$
\caption{$K_{2n}$, $n>0$.}
\label{even}
\end{figure}

For a representation $\rho: G_K \to SL_2(\BC)$, let $x=\tr \rho(a)=\tr \rho(b)$ and $y=\tr \rho(ab^{-1})$. From \cite{Tr-twist}, we have the following
\begin{lemma}
For the twist knot $K = K_{2n}$, a representation $\rho: G_K \to SL_2(\BC)$ is non-abelian if and only if $(x,y) \in \BC^2$ is a root of the polynomial 
$$R_{even}(x,y) = (y+1)S^2_{n-1}(y)-S^2_{n}(y)-2S_{n-1}(y)S_{n}(y)+x^2S_{n-1}(y) (S_{n}(y)-S_{n-1}(y)).$$
\label{rep-even}
\end{lemma}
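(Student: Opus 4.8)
The plan is to adapt the trace-identity approach used for $2$-bridge knots (and in Lemma \ref{rep}) to the non-standard presentation $G_K=\la a,b \mid wa=bw\ra$ with $w=(ba^{-1})^nb(ab^{-1})^n$. Since $x=\tr\rho(a)=\tr\rho(b)$ and $y=\tr\rho(ab^{-1})$, a non-abelian representation is, up to conjugation, determined by the pair $(x,y)$, and it descends to $G_K$ precisely when the relator is satisfied. The key point is that $\rho(wa)=\rho(bw)$ is equivalent to $\rho(w)\rho(a)\rho(w)^{-1}=\rho(b)$; writing $A=\rho(a)$, $B=\rho(b)$, $W=\rho(w)$, the condition $WAW^{-1}=B$ can be re-expressed (for a non-abelian pair, where $A$ and $B$ do not commute) as a single scalar equation in $x$ and $y$. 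Concretely, one uses that $WAW^{-1}=B$ together with $\tr(WAW^{-1})=\tr A=x$ (automatically satisfied) forces a rank condition, and the surviving obstruction is captured by a trace such as $\tr(WAW^{-1}B^{-1})=2$, or equivalently by the vanishing of a suitable $2\times2$ determinant built from $W$.

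First I would compute $W=\rho((ba^{-1})^nb(ab^{-1})^n)$ using Lemma \ref{recurrence}. Setting $M_j=(BA^{-1})^j$, these matrices satisfy $M_{j+1}=\tr(BA^{-1})\,M_j-M_{j-1}$, and since $\tr(BA^{-1})=\tr(AB^{-1})=y$ (both equal $x^2-z$ in the $2$-bridge normalization, but here we take $y$ as the chosen variable), Lemma \ref{recurrence} gives $(BA^{-1})^n=S_{n-1}(y)(BA^{-1})-S_{n-2}(y)I$ and similarly $(AB^{-1})^n=S_{n-1}(y)(AB^{-1})-S_{n-2}(y)I$. Substituting these into $W$ and multiplying out, $W$ becomes an explicit $\BC$-linear combination of the words $B$, $BA^{-1}B$, $A^{-1}B$, $BA^{-1}BAB^{-1}$, etc., with coefficients polynomial in $y$. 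Then I would impose $WA-BW=0$ (equivalently $WAW^{-1}-B=0$), expand every entry using the standard $SL_2$ trace relations $\tr CD=\tr C\tr D-\tr CD^{-1}$ and $\det(I+C)=1+\det C+\tr C$ from Lemmas \ref{S}--\ref{hl'}, and collect the result into a single polynomial identity. The normalization $x=\tr A=\tr B$ and $y=\tr AB^{-1}$ fixes $A,B$ up to conjugacy, so the calculation can be done with explicit upper/lower triangular representatives, which turns the matrix equation $WA=BW$ into two scalar equations, one of which is automatic and the other of which is $R_{even}(x,y)=0$ after simplification via Lemma \ref{S}.

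The main obstacle I expect is purely computational: carrying the product $(ba^{-1})^n b (ab^{-1})^n$ through the Chebyshev substitution and then through the $SL_2$ trace reductions without error, since intermediate expressions involve products of three or four $S_j(y)$'s and cross terms $S_{n-1}(y)S_n(y)$ that must be repeatedly collapsed using $S_n^2-yS_nS_{n-1}+S_{n-1}^2=1$. There is also a bookkeeping subtlety in verifying that the abelian locus (where $A$, $B$ commute) is genuinely excluded — i.e., that the factor of $R_{even}$ one might worry corresponds to abelian representations does not actually divide it, so that every root $(x,y)$ really does give a non-abelian representation. Once the explicit triangular computation is set up, however, the identification of the non-abelian condition with $R_{even}(x,y)=0$ follows by direct comparison of coefficients, and the result is exactly the formula cited from \cite{Tr-twist}.
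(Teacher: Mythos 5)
The paper does not prove this lemma at all: it is quoted verbatim from \cite{Tr-twist} (see also \cite{NT}), so there is no internal argument to compare against. Your outline does follow the standard derivation used in those references (fix triangular representatives of $A=\rho(a)$, $B=\rho(b)$ determined up to conjugacy by $(x,y)$, use Cayley--Hamilton via Lemma \ref{recurrence} to write $(BA^{-1})^n$ and $(AB^{-1})^n$ as $S_{n-1}(y)\cdot(\,\cdot\,)-S_{n-2}(y)I$, and reduce the relator $wa=bw$ to one scalar polynomial condition), and your sanity check is consistent: for $n=1$ the formula collapses to the Riley polynomial of the figure-eight knot.

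Two points, however. First, your proposed reformulation of $WAW^{-1}=B$ as the trace condition $\tr(WAW^{-1}B^{-1})=2$ is not correct as stated: $\tr(CB^{-1})=2$ for $C,B\in SL_2(\BC)$ only says $CB^{-1}$ is parabolic or the identity, so it does not characterize $C=B$. The correct scalar reduction is Riley's: with $A$ upper and $B$ lower triangular sharing eigenvalue $s$ (so $x=s+s^{-1}$), the four entries of $WA-BW$ reduce to a single polynomial condition on one entry of $W$, and \emph{that} entry is what must be identified with $R_{even}$; one also needs the standard fact that the pair is irreducible precisely when $(y-2)(z-2)\neq 0$ to handle the ``if and only if.'' Second, and more importantly, the identity asserted by the lemma \emph{is} the computation: expanding $W=(BA^{-1})^nB(AB^{-1})^n$ through the Chebyshev substitution and collapsing the cross terms via $S_n^2-yS_nS_{n-1}+S_{n-1}^2=1$ is the entire content, and your proposal defers it. As a plan it is the right plan, but as a proof it has not yet established the formula for $R_{even}(x,y)$.
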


Let $r=waw^{-1}b^{-1}$. Then $\frac{\partial r}{\partial a}=w \left( 1+(1-a)w^{-1}\frac{\partial w}{\partial a} \right)$ where 
$$\frac{\partial w}{\partial a}=-(1+\cdots+(ba^{-1})^{n-1})ba^{-1}+(ba^{-1})^nb(1+\cdots+(ab^{-1})^{n-1}).$$
Suppose $\rho: G_K \to SL_2(\BC)$ is a non-abelian representation. Then the twisted Alexander polynomial of $K$ associated to $\rho$ is 
$$\Delta_{K,\rho}(t)=\det \Phi \left( \frac{\partial r}{\partial a} \right) \big/\det \Phi(1-b)=\det \Phi \left( \frac{\partial r}{\partial a} \right) \big/ (1-tx+t^2).$$ 
We have 
\begin{eqnarray*}
\det \Phi \left( \frac{\partial w}{\partial a} \right) &=& t^{2}| I+ (I-tA) t^{-1}(BA^{-1})^nB^{-1}(AB^{-1})^n \\
&& \big[ -(1+\cdots+(BA^{-1})^{n-1})BA^{-1}+(BA^{-1})^ntB(1+\cdots+(AB^{-1})^{n-1}) \big] |.
\end{eqnarray*}
where $A=\rho(a)$ and $B=\rho(b)$. 

Let $\{T_j(z)\}_j$ be the sequence of Chebyshev polynomials defined by $T_0(z)=2,\,T_1(z)=z$, and $T_{j+1}(z)=zT_j(z)-T_{j-1}(z)$ for all integers $j$. 

The following lemma follows easily from Lemma \ref{base}.

\begin{lemma}
For the twist knot $K = K_{2n}$, the highest and lowest degree terms of $\det \Phi \left( \frac{\partial w}{\partial a} \right)$ are $|1+\cdots+(AB^{-1})^{n-1}|t^4=\frac{T_n(y)-2}{y-2} \, t^4$ and $|-(1+\cdots+(BA^{-1})^{n-1})|t^0=\frac{T_n(y)-2}{y-2} \, t^0$ respectively, provided that $\frac{T_n(y)-2}{y-2}$ is non-zero. 
\label{hl-even}
\end{lemma}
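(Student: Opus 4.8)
The plan is to apply Lemma \ref{base} to the Laurent polynomial $L(t) := \Phi\left(\frac{\partial w}{\partial a}\right)$, so I first need to identify the lowest and highest degree terms of $L(t)$, read off their matrix coefficients, and check that these coefficients have nonzero determinant. Recall from the displayed formula in the text that
$$\det\Phi\!\left(\tfrac{\partial w}{\partial a}\right)=t^{2}\,\bigl|\, I+(I-tA)t^{-1}(BA^{-1})^nB^{-1}(AB^{-1})^n\bigl[-(1+\cdots+(BA^{-1})^{n-1})BA^{-1}+(BA^{-1})^n tB(1+\cdots+(AB^{-1})^{n-1})\bigr]\,\bigr|.$$
First I would expand the bracketed expression and collect powers of $t$. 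The term $I$ inside the outer parentheses contributes a constant; multiplying $(I-tA)t^{-1}$ against the two summands in the bracket produces terms of degree $t^{-1}$ up to $t^{\,2n-1}$ roughly, but the key observation is that after multiplying through by the outer $t^2$, the whole expression $L(t)$ (before taking determinant) is a matrix Laurent polynomial whose determinant we want. Actually, since Lemma \ref{base} is stated for $|L(t)|$ where $L(t)=\sum M_j t^j$, I would apply it directly to the matrix $N(t) := \Phi\left(\frac{\partial w}{\partial a}\right)$ itself. I expect $N(t)$ to have lowest term at $t^0$ and highest term at $t^2$, so that $|N(t)|$ has terms $|M_0|t^0$ and $|M_2|t^4$, matching the claimed $t^0$ and $t^4$.

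Next I would extract $M_0$ and $M_2$ explicitly. For the lowest term: setting $t=0$ in the parts of $N(t)$ that survive, the constant term comes from the piece where the factor $(I-tA)t^{-1}$ is paired against the $t$-carrying summand $(BA^{-1})^n tB(1+\cdots+(AB^{-1})^{n-1})$, giving (up to the overall $t^2$ and the bookkeeping) a contribution proportional to $(BA^{-1})^nB^{-1}(AB^{-1})^n\cdot(BA^{-1})^nB(1+\cdots+(AB^{-1})^{n-1})$ together with the $I$ term — but I expect the dominant constant-term matrix to simplify, after cancellation, to $-(1+\cdots+(BA^{-1})^{n-1})$ times a unit, whence $|M_0| = |{-}(1+\cdots+(BA^{-1})^{n-1})|$. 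Similarly the top term $M_2$ picks out the $-tA t^{-1}=-A$ factor paired with the highest-degree summand, yielding $|M_2| = |1+\cdots+(AB^{-1})^{n-1}|$. The careful part here is tracking which products telescope; I would use that $A,B\in SL_2(\BC)$ so $|CD|=|C||D|$ freely.

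Finally I must evaluate $|1+\cdots+(AB^{-1})^{n-1}|$ and $|-(1+\cdots+(BA^{-1})^{n-1})|$ and show both equal $\frac{T_n(y)-2}{y-2}$. Since $AB^{-1}\in SL_2(\BC)$ has trace $y$, Lemma \ref{recurrence} applied with $M_j=(AB^{-1})^j$ gives $(AB^{-1})^j = S_{j-1}(y)(AB^{-1}) - S_{j-2}(y)I$, so $1+\cdots+(AB^{-1})^{n-1} = \bigl(\sum_{j=0}^{n-1}S_{j-1}(y)\bigr)(AB^{-1}) - \bigl(\sum_{j=0}^{n-1}S_{j-2}(y)\bigr)I$; a telescoping identity for partial sums of Chebyshev polynomials then lets me compute this determinant in closed form, and a short manipulation using $T_n(y)=S_n(y)-S_{n-2}(y)$ and Lemma \ref{S} identifies it with $\frac{T_n(y)-2}{y-2}$. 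The same computation works verbatim for $BA^{-1}$ since $\tr(BA^{-1})=\tr(AB^{-1})=y$ (conjugate matrices, or $\tr X = \tr X^{-1}$ in $SL_2$), and the overall sign $-1$ does not affect the $2\times 2$ determinant up to the scalar $(-1)^2=1$; indeed $|-C|=|C|$ for $2\times2$ matrices. I expect the main obstacle to be the bookkeeping in the second paragraph — correctly identifying $M_0$ and $M_2$ after all the cancellation in the product $(BA^{-1})^nB^{-1}(AB^{-1})^n\cdot[\cdots]$ — rather than anything in the Chebyshev computation, which is routine given Lemmas \ref{S} and \ref{recurrence}. Once $|M_0|$ and $|M_2|$ are in hand and shown nonzero (which is exactly the hypothesis $\frac{T_n(y)-2}{y-2}\neq 0$), Lemma \ref{base} immediately yields the highest term $|M_2|t^4$ and lowest term $|M_0|t^0$ of $\det\Phi\left(\frac{\partial w}{\partial a}\right)$, completing the proof.
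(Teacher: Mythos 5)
Your overall strategy is the right one and is essentially what the paper intends (the paper gives no written proof beyond ``follows easily from Lemma \ref{base}''): factor out $\det\Phi(w)=t^2$, read off the extreme coefficient matrices of the remaining matrix Laurent polynomial, apply Lemma \ref{base}, and evaluate the two determinants by Chebyshev identities. Your highest-term identification is correct, and the evaluation $|1+\cdots+C^{n-1}|=\frac{T_n(y)-2}{y-2}$ for $C\in SL_2(\BC)$ with $\tr C=y$ (whether via your partial-sum route through Lemma \ref{recurrence}, or more quickly via $\det(C^n-I)/\det(C-I)=(2-T_n(y))/(2-y)$) is fine, as is the observation that $\tr(BA^{-1})=\tr(AB^{-1})=y$ and $|-C|=|C|$ handle the second determinant.

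The gap is in your identification of the lowest-degree coefficient $M_0$. Write the bracket as $Q_0+tQ_1$ with $Q_0=-(1+\cdots+(BA^{-1})^{n-1})BA^{-1}$ (no $t$) and $Q_1=(BA^{-1})^nB(1+\cdots+(AB^{-1})^{n-1})$, and set $P=(BA^{-1})^nB^{-1}(AB^{-1})^n$, $W=(BA^{-1})^nB(AB^{-1})^n$, so that $\Phi(\partial r/\partial a)=tW\bigl(I+(I-tA)t^{-1}P(Q_0+tQ_1)\bigr)$. The inner matrix has $t^{-1}$-coefficient $PQ_0$, $t^0$-coefficient $I+PQ_1-APQ_0$, and $t^1$-coefficient $-APQ_1$; after multiplying by $tW$ the constant term of $\Phi(\partial r/\partial a)$ is $WPQ_0$, i.e.\ it comes from pairing the $t^{-1}$ factor with the summand \emph{not} carrying a $t$. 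You instead pair $t^{-1}$ with the $t$-carrying summand and include the $I$ term; that combination is the $t^1$-coefficient, not $M_0$. Your conclusion survives only by the coincidence that $|PQ_1|=|1+\cdots+(AB^{-1})^{n-1}|$ and $|WPQ_0|=|{-}(1+\cdots+(BA^{-1})^{n-1})|$ are both equal to $\frac{T_n(y)-2}{y-2}$, so the stated formulas are unaffected; but the derivation of the lowest term as written does not produce the correct coefficient matrix and should be fixed as above before invoking Lemma \ref{base}.
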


\subsubsection{Genus} We first note that the genus of $K_{2n}$ is $1$. Lemmas \ref{rep-even} and \ref{hl-even} imply that the number of non-abelian representations where the degree of $\Delta_{K,\rho}$ is less than $4g(K)-2$ is equal to the number of solutions $(x,y) \in \BC^2$ of the following system 
\begin{eqnarray}
(T_n(y)-2)/(y-2) &=& 0, \label{T-even}\\
(y+1)S^2_{n-1}(y)-S^2_{n}(y)-2S_{n-1}(y)S_{n}(y)+x^2S_{n-1}(y) (S_{n}(y)-S_{n-1}(y)) &=& 0. \label{R-even}
\end{eqnarray}

Suppose eq. \eqref{T-even} holds. Then $y \not=2$ and $T_n(y)=2$.
We write $y=\beta+\beta^{-1}$ where $\beta \not=1$. Then $T_n(y)=\beta^{n}+\beta^{-n}=2$ is equivalent to $\beta^n=1$, or equivalent $\beta=e^{i\frac{2\pi  k}{n}}$ for some $1 \le k \le \frac{n}{2}$. It follows that eq. \eqref{T-even} has $\lfloor n/2 \rfloor$ distinct solutions given by $y=2 \cos \frac{2k\pi}{n}$ where $1 \le k \le \frac{n}{2}$. 

If $y=2 \cos \frac{2k\pi}{n}$ for some $1 \le k < \frac{n}{2}$ then $y=\beta+\beta^{-1}$ where $\beta=e^{i\frac{2k\pi}{n}}$. It follows that $S_{n-1}(y)=0$ and $S_{n}(y)=1$. Hence $R_{\emph{even}}$, the left hand side of eq. \eqref{R-even}, is equal to $-1$. 

If $y=-2$ (in this case $n$ must be even), it is easy to see that $R_{\emph{even}}=-(2n+1)(nx^2+2n+1)$, since $S_j(-2)=(-1)^j(j+1)$ for all integers $j$. Then eq. \eqref{R-even} is equivalent to $x^2=-(2+\frac{1}{n})$. 

Hence the system \eqref{T-even}+\eqref{R-even} has exactly $1+(-1)^n$ solutions.

\subsubsection{Fiberedness} The number of non-abelian representations where $\Delta_{K,\rho}$ is monic is equal to the number of solutions $(x,y) \in \BC^2$ of the following system 
\begin{eqnarray}
(T_n(y)-2)/(y-2) &=& 1, \label{T'-even}\\
(y+1)S^2_{n-1}(y)-S^2_{n}(y)-2S_{n-1}(y)S_{n}(y)+x^2S_{n-1}(y) (S_{n}(y)-S_{n-1}(y)) &=& 0. \label{R'-even}
\end{eqnarray}

Suppose eq. \eqref{T'-even} holds. Then $y \not=2$ and $T_n(y)=y$. We write $y=\beta+\beta^{-1}$ where $\beta \not=1$. Then $T_n(y)=y$ is equivalent to $\beta^{n+1}=1$ or $\beta^{n-1}=1$. It follows that the distinct solutions of eq. \eqref{T'-even} are $y=2 \cos \frac{2k\pi}{n+1}$ where $1 \le k < \frac{n+1}{2}$, $y=2 \cos \frac{2k\pi}{n-1}$ where $1 \le k < \frac{n-1}{2}$, and (if $n$ is odd) $y=-2$.

If $y=-2$ (in this case $n$ must be odd), it is easy to see that eq. \eqref{R'-even} is equivalent to $x^2=-(2+\frac{1}{n})$. 

Suppose $y=2 \cos \frac{2k\pi}{n+1}$ where $1 \le k < \frac{n+1}{2}$. Then $y=\beta+\beta^{-1}$ where $\beta=e^{i\frac{2k\pi}{n+1}}$. It follows that $S_{n-1}(y)=-1$ and $S_{n}(y)=0$. Hence $R_{even}=-(x^2+1)$, and eq. \eqref{R'-even} is equivalent to $x^2=-1$.

Suppose $y=2 \cos \frac{2k\pi}{n-1}$ where $1 \le k < \frac{n-1}{2}$. Then $y=\beta+\beta^{-1}$ where $\beta=e^{i\frac{2k\pi}{n-1}}$. It follows that $S_{n-1}(y)=1$ and $S_{n}(y)=y$. Hence $R_{even}=-(y^2+y-1)+x^2(y-1)$. If $y=1$,  i.e. $k=\frac{n-1}{6}$, then $R_{even}=-1$. If $y \not= 1$ then eq. \eqref{R'-even} is equivalent to $x^2=\frac{y^2+y-1}{y-1}$. If $y=-\frac{1+\sqrt{5}}{4}$ (i.e. $k=\frac{2(n-1)}{5}$) or $y=\frac{-1+\sqrt{5}}{4}$ (i.e. $k=\frac{n-1}{5}$) then $y^2+y-1=0$, and eq. \eqref{R'-even} is equivalent to $x=0$.

%If $x^2=0$ then $y^2+y-1=0$ i.e. $y=\frac{-1 \pm \sqrt{5}}{4}$ (golden ratio). Note that $\cos\frac{4\pi}{5}=-\frac{1+\sqrt{5}}{4}$, so $k=\frac{2(n-1)}{5}$. Moreover $\cos\frac{3\pi}{5}=\frac{1-\sqrt{5}}{4}$ , so $k=\frac{3(n-1)}{10}$.

Hence the system \eqref{T'-even}+\eqref{R'-even} has exactly $2n-2-a_n-b_n$ solutions, where $a_n,b_n$ are as defined in Theorem \ref{m=p-2}(i).

\subsection{$K_{2n-1},\, n>1$} From \cite{Tr-twist} (and also \cite{NT}), the knot group of $K=K_{2n-1}$ is $\la a,b \mid wa=bw \ra$ where $a,b$ are meridians depicted in Figure \ref{odd} and $w=(ab^{-1})^nb(ba^{-1})^n$. Note that this is not the standard presentation of a $2$-bridge knot.

\begin{figure}[htpb]
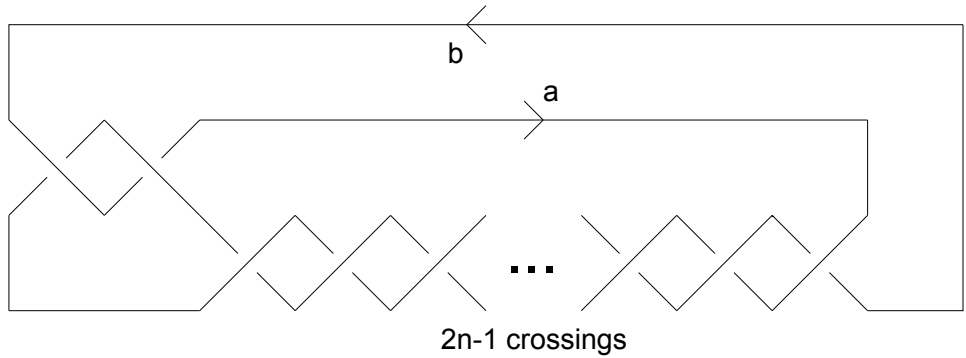

$$ \psdraw{drawing-odd}{5in} $$
\caption{$K_{2n-1}$, $n>0$.}
\label{odd}
\end{figure}

For a representation $\rho: G_K \to SL_2(\BC)$, let $x=\tr \rho(a)=\tr \rho(b)$ and $y=\tr \rho(ab^{-1})$. From \cite{Tr-twist}, we have the following
\begin{lemma}
For the twist knot $K = K_{2n-1}$, a representation $\rho: G_K \to SL_2(\BC)$ is non-abelian if and only if $(x,y) \in \BC^2$ is a root of the polynomial 
$$R_{odd}(x,y) = -(y+1)S^2_{n-1}(y)+S^2_{n-2}(y)+2S_{n-1}(y)S_{n-2}(y)+x^2S_{n-1}(y)(S_{n-1}(y)-S_{n-2}(y)).$$
\label{rep-odd}
\end{lemma}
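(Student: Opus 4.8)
The plan is to establish the stated parametrization by Riley's method for two-generator one-relator knot groups, using the Chebyshev recurrence of Lemma \ref{recurrence} as the computational engine to evaluate $\rho(w)$ in closed form. The skeleton is: normalize the conjugacy class so that $a,b$ map to explicit meridian matrices depending on $x$ and $y$; recognize that non-abelianity is exactly the condition $y\neq 2$; reduce the matrix relation $wa=bw$ to a single scalar (Riley) equation; and finally simplify that equation to $R_{odd}(x,y)=0$ using the Chebyshev identity of Lemma \ref{S} and the trace identity $\tr UV=\tr U\,\tr V-\tr UV^{-1}$.

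First I would fix the normal form. Since $a$ and $b$ are meridians they are conjugate in $G_K$, so $\tr\rho(a)=\tr\rho(b)=x$; writing $x=s+s^{-1}$, every representation is conjugate to one of the form
$$
\rho(a)=\begin{pmatrix} s & 1 \\ 0 & s^{-1}\end{pmatrix},\qquad
\rho(b)=\begin{pmatrix} s & 0 \\ 2-y & s^{-1}\end{pmatrix},
$$
and a direct check gives $\tr\rho(ab^{-1})=y$ for this choice. Computing $\rho(a)\rho(b)-\rho(b)\rho(a)$ shows it vanishes precisely when $y=2$, so $\rho$ is non-abelian if and only if $y\neq 2$, and in that range the normal form exhausts every conjugacy class of non-abelian representations. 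This reduces the lemma to showing that, for $y\neq 2$, the assignment above respects the relation $wa=bw$ exactly when $R_{odd}(x,y)=0$.

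Next I would impose the relation. Setting $A=\rho(a)$, $B=\rho(b)$, and $C=AB^{-1}$ (so $\tr C=y$), the word $w=(ab^{-1})^nb(ba^{-1})^n$ gives $\rho(w)=C^nBC^{-n}$, and the relation becomes the matrix identity $C^nBC^{-n}A=BC^nBC^{-n}$. Applying Lemma \ref{recurrence} to the powers of $C$ and of $C^{-1}$ (whose trace is also $y$) I can write $C^{\pm n}=S_{n-1}(y)C^{\pm 1}-S_{n-2}(y)I$, turning $\rho(w)$ into an explicit $2\times 2$ matrix whose entries are polynomials in $s^{\pm 1}$ and $y$ with coefficients built from $S_{n-1}(y)$ and $S_{n-2}(y)$. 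Riley's reduction shows that, once $y\neq 2$, the four scalar equations coming from $\rho(w)A-B\rho(w)=0$ are all proportional to one polynomial equation, which I would extract from a convenient off-diagonal entry.

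Finally I would collapse the resulting scalar equation to the stated closed form. The main work here is algebraic: expand the products, rewrite every trace occurring along the way in terms of $x=\tr A$ and $y=\tr(AB^{-1})$ via $\tr UV=\tr U\,\tr V-\tr UV^{-1}$ (the same device used in the proof of Lemma \ref{hl'}), and reduce the Chebyshev expressions using $S^2_{n-1}(y)-yS_{n-1}(y)S_{n-2}(y)+S^2_{n-2}(y)=1$ from Lemma \ref{S}; grouping the $x^2$-terms against the remaining Chebyshev products should reproduce $R_{odd}(x,y)$ exactly. I expect the principal obstacle to be this last simplification combined with the reduction to a single scalar equation: one must check that the factor discarded when passing from the full matrix identity to the Riley equation is a unit that does not vanish identically on $\{y\neq 2\}$, so that the equivalence ``relation holds $\iff R_{odd}=0$'' is genuine and neither creates nor loses a component of the solution set (the spurious locus $y=2$, where the normal form degenerates to an abelian representation, must be excluded by hand).
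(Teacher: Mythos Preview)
The paper does not prove this lemma at all; it is simply quoted from \cite{Tr-twist} (introduced by ``From \cite{Tr-twist}, we have the following''), just as the companion Lemma~\ref{rep-even} is. Your sketch is a correct reconstruction of the standard Riley-type argument that the cited reference presumably contains: normalize the two meridional generators to an upper/lower triangular pair depending on $(x,y)$, expand $(AB^{-1})^{\pm n}$ via the Chebyshev recursion of Lemma~\ref{recurrence}, and read the Riley polynomial off an entry of $\rho(w)A-B\rho(w)$, simplifying with Lemma~\ref{S}. Two points are worth making explicit. First, the assertion that the four entries of $\rho(w)A-B\rho(w)$ collapse to a single scalar equation is where the choice of normal form does real work; it is elementary for this triangular Riley form but should be checked rather than asserted. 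Second, the lemma as phrased in the paper tacitly identifies ``non-abelian representation'' with ``point on the non-abelian character variety'': in fact $R_{odd}(x,2)=nx^2-4n+1$ does vanish for two values of $x$, so the locus $\{R_{odd}=0\}$ meets $\{y=2\}$, and your remark that $y=2$ must be excluded by hand is exactly what is needed to make the equivalence precise.
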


Let $r=waw^{-1}b^{-1}$. Then $\frac{\partial r}{\partial a}=w \left( 1+(1-a)w^{-1}\frac{\partial w}{\partial a} \right)$ where 
$$\frac{\partial w}{\partial a}=(1+\cdots+(ab^{-1})^{n-1})-(ab^{-1})^nb(1+\cdots+(ba^{-1})^{n-1})ba^{-1}.$$
Suppose $\rho: G_K \to SL_2(\BC)$ is a non-abelian representation. Then the twisted Alexander polynomial of $K$ associated to $\rho$ is 
$$\Delta_{K,\rho}(t)=\det \Phi \left( \frac{\partial r}{\partial a} \right) \big/\det \Phi(1-b)=\det \Phi \left( \frac{\partial r}{\partial a} \right) \big/ (1-tx+t^2).$$ 
We have 
\begin{eqnarray*}
\det \Phi \left( \frac{\partial w}{\partial a} \right) &=& t^{2}| I+ (I-tA) t^{-1}(AB^{-1})^nB^{-1}(BA^{-1})^n \\
&& \big[ (1+\cdots+(AB^{-1})^{n-1})-(AB^{-1})^ntB(1+\cdots+(BA^{-1})^{n-1})BA^{-1} \big] |.
\end{eqnarray*}
where $A=\rho(a)$ and $B=\rho(b)$. The following lemma follows easily from Lemma \ref{base}.

\begin{lemma}
For the twist knot $K = K_{2n-1}$, the highest and lowest degree terms of $\det \Phi \left( \frac{\partial w}{\partial a} \right)$ are  $|1+\cdots+(BA^{-1})^{n-1}|t^4=\frac{T_n(y)-2}{y-2} \, t^4$ and $|-(1+\cdots+(AB^{-1})^{n-1})|t^0=\frac{T_n(y)-2}{y-2} \, t^0$ respectively, provided that $\frac{T_n(y)-2}{y-2}$ is non-zero. 
\label{hl-odd}
\end{lemma}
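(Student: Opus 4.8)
Lemma~\ref{hl-odd}: for $K=K_{2n-1}$, the highest and lowest degree terms of $\det\Phi\!\left(\frac{\partial w}{\partial a}\right)$ are $\frac{T_n(y)-2}{y-2}\,t^4$ and $\frac{T_n(y)-2}{y-2}\,t^0$ respectively, provided $\frac{T_n(y)-2}{y-2}$ is non-zero.

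The plan is to run the argument exactly parallel to the proof of Lemma~\ref{hl-even} (and Lemma~\ref{hl}), using Lemma~\ref{base} as the engine. First I would rewrite the displayed formula for $\det\Phi\!\left(\frac{\partial w}{\partial a}\right)$ as a single Laurent polynomial $L(t)=\sum_{j=r}^{s}M_j t^j$ with $2\times 2$ matrix coefficients: expanding the product $t^2\,|\,I+(I-tA)t^{-1}(AB^{-1})^nB^{-1}(BA^{-1})^n[\,(1+\cdots+(AB^{-1})^{n-1})-(AB^{-1})^ntB(1+\cdots+(BA^{-1})^{n-1})BA^{-1}\,]\,|$, one should check that the bracketed matrix-valued expression, after distributing the $t^{-1}(I-tA)$ and the powers of $t$ hidden in $(AB^{-1})^n$ versus $(BA^{-1})^n$ (recall $\Phi$ sends $a,b\mapsto tA,tB$ so $AB^{-1}\mapsto AB^{-1}$ but $(AB^{-1})^ntB$ carries a genuine $t$, etc.), has lowest-degree monomial $t^0$ and highest-degree monomial $t^4$ once the outer determinant is taken. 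Concretely: inside the determinant the expression is $I$ plus something whose lowest $t$-power is $t^{-1}$ (from $t^{-1}\cdot 1$) and whose highest $t$-power is $t^{?}$; then the outer $t^2$ and the fact that $\det$ of a $2\times2$ matrix-Laurent-polynomial doubles degrees (Lemma~\ref{base}) converts the extreme coefficient matrices into the extreme coefficients of the scalar Laurent polynomial.

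Next I would identify $M_r$ and $M_s$ explicitly. Following the pattern of Lemma~\ref{hl}, the lowest coefficient comes from the term where $(I-tA)$ contributes $I$ and the bracket contributes its $t$-free part, namely $(AB^{-1})^nB^{-1}(BA^{-1})^n\cdot(1+\cdots+(AB^{-1})^{n-1})$ — but the $t^2$ prefactor and the structure of $\Phi$ should make the surviving matrix simply $-(1+\cdots+(BA^{-1})^{n-1})$ up to conjugation (explaining the minus sign and the shift from $AB^{-1}$ to $BA^{-1}$ in the stated answer), and dually the highest coefficient is $(1+\cdots+(BA^{-1})^{n-1})$. Then I invoke the standard evaluation: for $C\in SL_2(\BC)$ with $\tr C = y$, one has $\det(1+C+\cdots+C^{n-1}) = \det\!\big(\tfrac{C^n-I}{C-I}\big)$, and using the Cayley–Hamilton / Chebyshev identity $\tr(C^j)=T_j(y)$ together with $\det(C^n-I)=2-T_n(y)$, $\det(C-I)=2-y$ (since $\det(C-I)=1-\tr C+\det C = 2-y$), one gets $\det(1+\cdots+C^{n-1})=\frac{T_n(y)-2}{y-2}$. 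The factor $-1$ inside one of the displayed determinants is harmless because it is a $2\times 2$ scalar block, $\det(-N)=\det N$. Since conjugate matrices have equal determinants, the $A\!\leftrightarrow\!B$ and $AB^{-1}\!\leftrightarrow\!BA^{-1}$ swaps do not affect the value, which is why both extreme coefficients equal the same quantity $\frac{T_n(y)-2}{y-2}$.

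The one genuine check — and the only place I expect friction — is the bookkeeping of $t$-powers in the outer $|\cdot|$: verifying that after all cancellations the lowest term is precisely $t^0$ and the highest precisely $t^4$ (not, say, $t^2$ times a degree-$2$ polynomial with a different extremal structure), and confirming that $|M_r|$ and $|M_s|$ are exactly the claimed conjugates rather than picking up extra cross-terms from the $(I-tA)$ factor at the extreme degrees. This is entirely analogous to what was done for $K_{2n}$ in Lemma~\ref{hl-even}, where the highest term came out $t^4$ and lowest $t^0$ with coefficient $\frac{T_n(y)-2}{y-2}$ in both cases, so I would write: ``The proof is identical to that of Lemma~\ref{hl-even}, with the roles of $A$ and $B$ (equivalently $AB^{-1}$ and $BA^{-1}$) interchanged in the extreme coefficients; since $\det$ is conjugation-invariant the resulting value is unchanged.'' The non-vanishing proviso is just the hypothesis of Lemma~\ref{base}, guaranteeing the claimed terms are actually the extreme terms rather than being killed.
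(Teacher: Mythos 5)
Your proposal is correct and follows exactly the route the paper intends: the paper gives no written proof beyond ``follows easily from Lemma~\ref{base},'' and your expansion of the bracketed expression, identification of the extreme $t$-powers ($t^{-1}$ and $t^{1}$ inside the determinant, hence $t^0$ and $t^4$ after squaring degrees and multiplying by the $t^2$ prefactor), and the evaluation $\det(1+\cdots+C^{n-1})=\det\bigl((C^n-I)(C-I)^{-1}\bigr)=\frac{T_n(y)-2}{y-2}$ is precisely the intended argument. The only blemish is a harmless transcription slip in which of $AB^{-1}$ versus $BA^{-1}$ labels the lowest-degree coefficient; since both matrices have trace $y$ the two determinants coincide, so the conclusion is unaffected.
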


Applying Lemmas \ref{rep-odd} and \ref{hl-odd}, the proof of Theorem \ref{m=p-2}(ii) is similar to that of Theorem \ref{m=p-2}(i). This completes the proof of Theorem \ref{m=p-2}.

\begin{remark}
From the proof of Theorem \ref{m=p-2}, one can easily see that the twisted Alexander polynomial associated to any parabolic $SL_2(\BC)$-representation of the knot group of the $m$-twist knot $K_m,\, m>0$, determines the fiberedness and genus of $K_m$. This gives another proof of \cite[Theorem 1.1]{MT} and hence of a conjecture of Dunfield, Friedl and Jackson \cite{DFJ} for $K_m$. 
\end{remark}


\begin{thebibliography}{99999}

\bibitem[BZ]{BZ} G. Burde and H. Zieschang, {\em Knots}, de Gruyter Stud. Math., vol. 5, de Gruyter, Berlin, 2003.

\bibitem[Cr]{Cr} R. Crowell, {\em Genus of alternating link types}, Ann. of Math. (2) \textbf{69} (1959), 258--275.

\bibitem[DFJ]{DFJ} N. Dunfield, S. Friedl, and N. Jackson, 
{\em Twisted Alexander polynomials of hyperbolic knots}, arXiv:1108.3045.

\bibitem[FK]{FK06-1}
S. Friedl and T. Kim,
\textit{The Thurston norm, fibered manifolds and twisted Alexander polynomials},
Topology {\bf 45} (2006) 929--953.

\bibitem[FV1]{FV10-1}
S. Friedl and S. Vidussi,
\textit{A survey of twisted Alexander polynomials},
The Mathematics of Knots: Theory and Application
(Contributions in Mathematical and Computational Sciences),
eds. Markus Banagl and Denis Vogel (2010) 45--94.

\bibitem[FV2]{FV11-1}
S. Friedl and S. Vidussi, \textit{Twisted Alexander polynomials
detect fibered $3$-manifolds}, Ann. of Math. 
{\bf 173} (2011), 1587--1643. 

\bibitem[FV3]{FV12-1}
S. Friedl and S. Vidussi,
\textit{The Thurston norm and twisted Alexander polynomials},
arXiv:1204.6456.

\bibitem[GKM]{GKM05-1}
H. Goda, T. Kitano and T. Morifuji,
\textit{Reidemeister torsion, twisted Alexander polynomial
and fibered knots},
Comment. Math. Helv. {\bf 80} (2005), 51--61.

\bibitem[HS]{HS} J. Hoste and P. Shanahan, {\em Twisted Alexander polynomials of 2-bridge knots}, arXiv:1206.1894.

\bibitem[KmM]{KM} T. Kim and T. Morifuji, {\em Twisted Alexander polynomials and character varieties 
of $2$-bridge knot groups}, Internat. J. Math. \textbf{23} (2012), 1250022, 24 pp.

\bibitem[KtM]{KM05-1}
T. Kitano and T. Morifuji,
\textit{Divisibility of twisted Alexander polynomials and fibered knots},
Ann. Sc. Norm. Super. Pisa Cl. Sci. (5)
{\bf 4} (2005) 179--186.

\bibitem[Le]{Le} T. Le, {\it Varieties of representations and their subvarieties of cohomology jumps for knot groups}, (Russian) Mat. Sb. {\bf 184} (1993), 57-82; 
translation in Russian Acad. Sci. Sb. Math. {\bf 78} (1994), 187-209.

\bibitem[Li]{Lin01-1}
X. S. Lin,
\textit{Representations of knot groups and twisted Alexander polynomials},
Acta Math. Sin.
(Engl. Ser.) {\bf 17} (2001), 361--380.

\bibitem[MT]{MT} T. Morifuji and A. Tran, {\em Twisted Alexander polynomials of 2-bridge knots for parabolic representations}, arXiv:1301.1101.

\bibitem[Mu]{Mu} K. Murasugi, { \em On the genus of the alternating knot I, II}, J. Math. Soc. Japan \textbf{10} (1958), 94--105 and 235--248.

\bibitem[NT]{NT} F. Nagasato and A. Tran, {\em Presentations of character varieties of 2-bridge knots using Chebyshev polynomials}, arXiv:1301.0138.

\bibitem[Tr]{Tr-twist} A. Tran, {\em On left-orderable fundamental groups and Dehn surgeries on twist knots}, arXiv:1301.0665.

\bibitem[Wa]{Wada94-1}
M. Wada,
\textit{Twisted Alexander polynomial for finitely
presentable groups},
Topology {\bf 33} (1994), 241--256.

\end{thebibliography}
\end{document}